\newtheorem{thm}{Theorem}%[section]
\newtheorem{definition}{Definition}
\newtheorem{claim}[thm]{Claim}
\newtheorem{corollary}[thm]{Corollary}
\newcommand{\thistheoremname}{}
\newtheorem*{genericthm*}{\thistheoremname}
\newenvironment{namedthm*}[1]
{\renewcommand{\thistheoremname}{#1}%
	\begin{genericthm*}}
	{\end{genericthm*}}
\newcommand\ex{\ensuremath{\mathrm{ex}}}
\newcommand\lin{\ensuremath{\mathrm{lin}}}
\newcommand\bip{\ensuremath{\mathrm{bip}}}
\newcommand{\C}{\mathscr{C}}
\newcommand{\abs}[1]{\left\lvert{#1}\right\rvert}
\newcommand{\floor}[1]{\left\lfloor{#1}\right\rfloor}
\title{Asymptotics for Tur\'an numbers of cycles in \\ $3$-uniform linear hypergraphs}
\author{
	Beka Ergemlidze\thanks{
		Department of Mathematics, Central European University, Budapest.
		E-mail: beka.ergemlidze@gmail.com
	} \qquad Ervin Gy\H{o}ri \thanks{R\'enyi Institute, Hungarian Academy of Sciences and
	Department of Mathematics, Central European University, Budapest. E-mail: gyori.ervin@renyi.mta.hu} \qquad Abhishek Methuku \thanks{Department of Mathematics, Central European University, Budapest. (Corresponding author) E-mail: abhishekmethuku@gmail.com}
}
\begin{document}

\maketitle

\begin{abstract}
Let $\mathcal{F}$ be a family of $3$-uniform linear hypergraphs. The \emph{linear Tur\'an number} of $\mathcal F$ is the maximum possible number of edges in a $3$-uniform linear hypergraph on $n$ vertices which contains no member of $\mathcal{F}$ as a subhypergraph. 

In this paper we show that the linear Tur\'an number of the five cycle $C_5$ (in the Berge sense) is $\frac{1}{3 \sqrt3}n^{3/2}$ asymptotically. We also show that the linear Tur\'an number of the four cycle $C_4$ and $\{C_3, C_4\}$ are equal asmptotically, which is a strengthening of a theorem of Lazebnik and Verstra\"ete \cite{Lazeb_Verstraete}.

% $\ex^{lin}_3(n, C_4) = \frac{1}{6}n^{3/2} + O(n)$. The latter strengthens a theorem of Lazebnik and Verstra\"ete stating that $\ex^{lin}_3(n, \{C_3,C_4\}) = \frac{1}{6}n^{3/2} + O(n)$. This shows $\ex^{lin}_3(n, C_4) \sim \ex^{lin}_3(n, \{C_3,C_4\})$.

%A linear cycle $C^{lin}_l$ of length $l$ is a cyclic list of triples such that consecutive sets intersect in exactly one element and non-consecutive sets are disjoint.
We establish a connection between the linear Tur\'an number of the linear cycle of length $2k+1$ and the extremal number of edges in a graph of girth more than $2k-2$. Combining our result and a theorem of Collier-Cartaino, Graber and Jiang \cite{C_Graber_Jiang}, we obtain that the linear Tur\'an number of the linear cycle of length $2k+1$ is $\Theta(n^{1+\frac{1}{k}})$ for $k = 2, 3, 4, 6$.

%Collier-Cartaino, Graber and Jiang showed that for all $l \ge 3$, $\ex^{lin}_3(n, C^{lin}_l) \le O(n^{1+\frac{1}{\floor{l/2}}})$. For odd $l$, we show $\ex^{lin}_3(n, C^{lin}_l) \ge \Omega(n^{1+\frac{1}{\floor{l/2}}})$ whenever a graph of girth more than $l-3$ with $\Omega(n^{1+\frac{1}{(l-3)/2}})$ edges exists, which implies $\ex^{lin}_3(n, C^{lin}_l) = \Theta(n^{1+\frac{1}{\floor{l/2}}})$ for $l = 5, 7, 9, 13$.
\end{abstract}

\section{Introduction}

A hypergraph $H = (V, E)$ is a family $E$ of distinct subsets of a finite set $V$. The members of $E$ are called \emph{hyperedges} and the elements of $V$ are called \emph{vertices}. A hypergraph is called $r$-uniform is each member of $E$ has size $r$. A hypergraph $H = (V, E)$ is called \emph{linear} if every two hyperedges have at most one vertex in common. A hypergraph is $\mathcal F$-free if it does not contain any member of $\mathcal F$ as a subhypergraph. A $2$-uniform hypergraph is simply called a graph. 

\vspace{3mm}

Given a family of graphs $\mathcal F$, the  \emph{Tur\'an number} of $\mathcal F$, denoted $\ex(n, \mathcal F)$, is the maximum number of edges in an $\mathcal F$-free graph on $n$ vertices and the \emph{bipartite Tur\'an number} of $\mathcal F$, denoted $\ex_{\bip}(n, \mathcal F)$ is the maximum number of edges in an $\mathcal F$-free bipartite graph on $n$ vertices. 

\vspace{3mm}

Given a family of $3$-uniform hypergraphs $\mathcal F$, let $\ex_3(n, \mathcal F)$ denote the maximum number of hyperedges of an $\mathcal F$-free $3$-uniform hypergraph on $n$ vertices and similarly, given a family of $3$-uniform linear hypergraphs $\mathcal F$, the \emph{linear Tur\'an number} of $\mathcal F$, denoted $\ex^{\lin}_3(n, \mathcal F)$, is the maximum number of hyperedges in an $\mathcal F$-free $3$-uniform linear hypergraph on $n$ vertices. When $\mathcal F = \{F\}$ then we simply write $\ex^{\lin}_3(n, F)$ instead of $\ex^{\lin}_3(n, \{F\})$. 

\vspace{3mm}

% A (Berge) path of length $k \ge 0$ is an alternating sequence $v_1,h_{1},v_2,h_{2},...,v_k, h_k, v_{k+1}$ of distinct vertices and distinct hyperedges such that $v_i,v_{i+1} \in h_i$ for each $i \in \{1,2,\ldots,k\}$.
 
 A (Berge) cycle $C_k$ of length $k \ge 2$ is an alternating sequence of distinct vertices and distinct edges of the form $v_1,h_{1},v_2,h_{2},\ldots,v_k,h_{k}$ where $v_i,v_{i+1} \in h_{i}$ for each $i \in \{1,2,\ldots,k-1\}$ and $v_k,v_1 \in h_{k}$. (Note that forbidding a Berge cycle $C_k$ actually forbids a family of hypergraphs, not just one hypergraph, as there may be many ways to choose the hyperedges $h_i$.) This definition of a hypergraph cycle is the classical definition due to Berge. For $k \ge 2$, F\"uredi and \"Ozkahya \cite{Furedi_Ozkahya} showed $\ex^{\lin}_3(n, C_{2k+1}) \le 2kn^{1+1/k} + 9kn$. In fact it is shown in \cite{Gyori_Lemons, Furedi_Ozkahya} that $\ex_3(n, C_{2k+1}) \le O(n^{1+1/k})$. For the even case it is easy to show $\ex^{\lin}_3(n, C_{2k}) \le \ex(n, C_{2k}) = O(n^{1+1/k})$ by selecting a pair from each hyperedge of a $C_{2k}$-free $3$-uniform linear hypergraph. A (Berge) path of length $k$ is an alternating sequence of distinct vertices and distinct edges of the form $v_0, h_0, v_1,h_{1},v_2,h_{2},\ldots, v_{k-1}, h_{k-1},v_k$ where $v_i,v_{i+1} \in h_{i}$ for each $i \in \{0, 1,2,\ldots,k-1\}$. Recently the notion of Berge cycles and Berge paths was generalized to arbitrary Berge graphs in \cite{gp1} and the linear Tur\'an number of (Berge) $K_{2,t}$ was studied in \cite{T2016} and \cite{GMM}. Below we concentrate on the linear Tur\'an numbers of $C_3$, $C_4$ and $C_5$.
 
\vspace{3mm}

Determining $\ex^{\ensuremath{\mathrm{lin}}}_3(n, C_3)$ is basically equivalent to the famous $(6,3)$-problem which is a special case of a general problem of Brown, Erd\H{o}s, and S\'os \cite{Brown_Erd_Sos}. This was settled by Ruzsa and Szemer\'edi in their classical paper \cite{Ruzsa_Szem}, showing that $n^{2-\frac{c}{\sqrt{\log n}}} < \ex^{\lin}_3(n, C_3) = o(n^2)$  for some constant $c > 0$.

%\begin{thm}[Ruzsa, Szemer\'edi \cite{Ruzsa_Szem}]
%$n^{2-c \sqrt{logn}} < \ex^{lin}_3(n, C_3) = o(n^2)$  for some constant $c > 0$.
%\end{thm}
\vspace{3mm}

Only a handful of results are known about the asymptotic behaviour of Tur\'an numbers for hypergraphs. In this paper, we focus on determining the asymptotics of $\ex^{\lin}_3(n, C_5)$ by giving a new construction, and a new proof of the upper bound which introduces some important ideas. We also determine the asymptotics of $\ex^{\lin}_3(n, C_4)$ and construct $3$-uniform linear hypergraphs avoiding linear cycles of given odd length(s). In an upcoming paper  \cite{C_5_C_4}, we focus on estimating $\ex_3(n, C_4)$ and $\ex_3(n, C_5)$, improving an estimate of Bollob\' as and Gy\H{o}ri \cite{BGY2008} that shows $\frac{n^{3/2}}{3\sqrt{3}} \le \ex_3(n, C_5) \le \sqrt{2}n^{3/2} + 4.5n$. Surprisingly, even though the $C_5$-free hypergraph that Bollob\'as and Gy\H{o}ri constructed in order to establish their lower bound has the same size as the $C_5$-free hypergraph we constructed in order to obtain the lower bound in our Theorem \ref{main} below, these two constructions are quite different. Their hypergraph is very far from being linear. 

\vspace{3mm}

 The following is our main result.

\begin{thm}
	\label{main}
	$$\ex^{\lin}_3(n, C_5) = \frac{1}{3 \sqrt3}n^{3/2} + O(n).$$
\end{thm}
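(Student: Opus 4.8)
The plan is to prove the theorem in two independent halves: a lower bound construction showing $\ex^{\lin}_3(n, C_5) \ge \frac{1}{3\sqrt3}n^{3/2} - O(n)$, and a matching upper bound $\ex^{\lin}_3(n, C_5) \le \frac{1}{3\sqrt3}n^{3/2} + O(n)$. Since the abstract promises a new construction and a new upper-bound proof, both halves require genuine work rather than citation.

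For the lower bound, I would build an explicit $C_5$-free $3$-uniform linear hypergraph. A natural source of linear hypergraphs avoiding short cycles is an algebraic or incidence construction: I would start from a projective-plane-type or affine-plane-type point--line incidence structure, or from a polarity graph / generalized quadrangle, and then partition its edge set into triples that form the hyperedges. Concretely, I expect to take a graph $G$ of girth large enough (so that no Berge $C_5$ can arise) on roughly $n$ vertices and tile its edges by triangles or paths of length two, where each hyperedge consists of the three vertices of such a configuration; linearity forces any two hyperedges to meet in at most one vertex. The target constant $\frac{1}{3\sqrt3}$ strongly suggests choosing the ground graph to be (close to) $\frac{1}{\sqrt3}\sqrt{n}$-regular so that the number of hyperedges is about $\frac{1}{3}\cdot\frac{1}{\sqrt3}n^{3/2}$. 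I would then verify directly that no Berge five-cycle appears, which reduces to checking that the underlying structure has no short closed walks of the relevant type.

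For the upper bound I would take an arbitrary $C_5$-free $3$-uniform linear hypergraph $H$ on $n$ vertices with $m$ edges and bound $m$. The key idea I would introduce is to encode each hyperedge by the three pairs it contains and to study the \emph{shadow graph} $G$ (the graph on the same vertex set whose edges are the pairs covered by hyperedges of $H$); by linearity $G$ has exactly $3m$ edges. The $C_5$-freeness of $H$ should translate into a strong local restriction on $G$ and on how hyperedges attach to high-degree vertices. The plan is to count, for each vertex $v$, the pairs of hyperedges through $v$ together with the walks of length two leaving $v$, and to argue that a Berge $C_5$ is forced unless the number of such configurations is small; summing the resulting inequality over all vertices and applying convexity (Cauchy--Schwarz) should yield $\sum_v d(v)^2 \le (1+o(1))\,\text{const}\cdot n^2$, which after optimizing the degree distribution gives the bound $m \le \frac{1}{3\sqrt3}n^{3/2}+O(n)$.

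The hard part will be the upper bound, specifically making the combinatorial dichotomy precise: I must show that whenever a vertex $v$ lies in many hyperedges, the second neighborhoods overlap enough to close up into a Berge $C_5$, while carefully excluding the degenerate closings that use a repeated vertex or edge (which a Berge cycle forbids, since its vertices and edges must be distinct). Controlling these degeneracies is exactly where linearity is essential and where the extra additive $O(n)$ terms are absorbed. I would organize this as a sequence of structural lemmas: first that the shadow graph is essentially $C_4$-free up to controlled exceptions, then a counting lemma relating $\sum_v d(v)^2$ to the number of cherries (paths of length two), and finally the optimization that pins down the constant $\frac{1}{3\sqrt3}$. Matching the lower-bound constant exactly, rather than up to a factor, is the delicate point and will require the counting to be tight to leading order.
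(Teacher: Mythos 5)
Both halves of your plan run into concrete obstructions. For the lower bound, the paper's extremal example is \emph{not} an edge-tiling of a high-girth graph: its $2$-shadow contains triangles (every hyperedge is one) and $\Theta(n^2)$ copies of $C_4$, and it has $n\sqrt{n/3}\approx 0.58\,n^{3/2}$ shadow edges --- more than any $n$-vertex graph of girth $6$ can have ($\le \frac12 n^{3/2}(1+o(1))$), so no construction that starts from a graph of girth large enough to ``forbid short closed walks'' can reach the constant $\frac{1}{3\sqrt3}$. (Tiling a $\frac{1}{\sqrt3}\sqrt n$-regular graph by cherries or triangles gives only $\frac{1}{4\sqrt3}n^{3/2}$ or $\frac{1}{6\sqrt3}n^{3/2}$ hyperedges, and tiling a denser graph breaks Berge-$C_5$-freeness: a $6$-cycle $x_1\cdots x_6$ whose edges lie in cherries already yields a Berge $C_5$ on $x_1,\dots,x_5$.) The paper instead takes $q=\sqrt{n/3}$ disjoint copies $L_t\cup R_t$ of the vertex set of a complete balanced bipartite graph plus one ``middle'' vertex $v_{i,j}$ per edge $l_ir_j$, shared across all layers, with hyperedges $v_{i,j}l_i^tr_j^t$; $C_5$-freeness is then a parity argument about how a Berge/linear cycle must pass through the middle layer $B$.

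For the upper bound, the two structural claims you commit to are false. ``The shadow graph is essentially $C_4$-free up to controlled exceptions'' fails badly, as noted above. And the per-vertex distance-$2$ count you describe (pairs of hyperedges through $v$, walks of length two from $v$, second neighborhoods nearly disjoint) is exactly the paper's proof of the \emph{$C_4$} theorem; it yields $\sum_{x\in N_1^H(v)}2d(x)\le n+O(d(v))$ and hence $\abs{E(H)}\le \frac{n^{3/2}}{6}+O(n)$, which is \emph{smaller} than $\frac{1}{3\sqrt3}n^{3/2}$ and therefore cannot hold for $C_5$-free hypergraphs (in the extremal example $\sum_{x\in N_1^H(v)}2d(x)=\frac{4n}{3}$). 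Forbidding $C_5$ is a distance-$3$ condition, and the paper's argument counts ``$3$-links'' (linear paths of three hyperedges) anchored at a \emph{hyperedge} $abc$ rather than at a vertex: the key lemma is that the three second neighborhoods $N_2(a),N_2(b),N_2(c)$ (after discarding $O(d_{max})$ exceptional hyperedges) cover each vertex at most twice, giving $p_{abc}(H)\le 2n+O(d_{max})$ and hence $p(H)\le n\abs{E(H)}+O(nd^2)$; this is played against the Blakley--Roy lower bound $p(H)\ge nd^3-O(nd^2)$ on walks of length $3$ in the shadow graph, yielding $d\le\sqrt{n/3}+O(1)$. That hyperedge-anchored, factor-two covering lemma is the idea that produces the constant $\frac{1}{3\sqrt3}$, and it is absent from your outline; a minimum-degree reduction is also needed at the outset to control $d_{max}$ before Blakley--Roy can be applied.
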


To show the lower bound in the above theorem we give the following construction. For the sake of convenience we usually drop floors and ceilings of various quantities in the construction below, and in the rest of the paper, as it does not effect the asymptotics. 
%from a more general construction which we show in Theorem \ref{construction} and Corollary \ref{smallk}.
\vspace{1mm}

\textbf{Construction of a $C_5$-free linear hypergraph $H$:} For each $ 1 \le t \le \sqrt{n/3}$, let $L_t = \{l^t_1, l^t_2, \ldots, l^t_{\sqrt{n/3}}\}$ and $R_t = \{r^t_1, r^t_2, \ldots, r^t_{\sqrt{n/3}}\}$. Let $B = \{v_{i, j} \mid 1 \le i, j \le \sqrt{n/3}\}$.  The vertex set of $H$ is $V(H) = \bigcup\limits_{i=1}^{\sqrt{n/3}} (L_{i} \cup R_{i})  \cup B$ and the edge set of $H$ is $E(H) = \{v_{i,j} l^t_i r^t_j \mid v_{i,j} \in B \text{ and } 1 \le t \le \sqrt{n/3}\}$. 

\vspace{1mm}

Clearly $\abs{V(H)} = n$ and $\abs{E(H)} = \frac{n^{3/2}}{3\sqrt{3}}$ and $H$ is linear. It is easy to check that $H$ is $C_5$-free but this is proved in a more general setting in Theorem \ref{construction}.

\vspace{2mm}

Lazebnik and Verstra\"ete \cite{Lazeb_Verstraete} showed that
\begin{equation}
\label{LazebVers}
 \ex^{\lin}_3(n, \{C_3,C_4\}) = \frac{n^{3/2}}{6} + O(n). 
\end{equation}
%More precisely, they proved:
%
%\begin{thm} [Lazebnik, Verstra\"ete \cite{Lazeb_Verstraete}]
%	\label{girth}
%$\ex^{lin}_3(n, \{C_3,C_4\}) \le \frac{1}{6} n \sqrt{n-\frac{3}{4}} + \frac{1}{12}n$. Moreover, for any odd prime $q$, there exist linear $3$-uniform hypergraphs on $n = q^2$ vertices with $\frac{1}{6}n^{3/2} - \frac{1}{6} \sqrt{n}$ hyperedges.
%\end{thm}
This was remarkable especially considering the fact that the asymptotics for the corresponding extremal function  for graphs $\ex(n, \{C_3,C_4\})$ is not known and is a long standing problem of Erd\H{o}s \cite{Erdos}. 
%The best known bounds in this case are $(1/2 \sqrt 2) n^{3/2} + O(n) \le \ex(n, \{C_3,C_4\}) \le (1/2)n^{3/2}$. 
Erd\H{o}s and Simonovits \cite{Erd_Sim} conjectured that $\ex(n, \{C_3,C_4\}) = \ex_{\bip}(n, C_4)$ while Allen, Keevash, Sudakov, and Verstra\"ete \cite{AKSV} conjectured that this is not true.

\vspace{2mm}

In this paper we strengthen the above mentioned result of Lazebnik and Verstra\"ete \cite{Lazeb_Verstraete}, by showing that their upper bound in \eqref{LazebVers} still holds even if the $C_3$-free condition is dropped. This shows $\ex^{\lin}_3(n, C_4) \sim \ex^{\lin}_3(n, \{C_3,C_4\})$, as detailed below. 
% In this paper we strengthen Theorem \ref{girth} by showing that $\ex^{lin}_3(n, C_4) = \ex^{lin}_3(n, \{C_3,C_4\}) =\frac{1}{6}n^{3/2} + o(n^{3/2})$, as detailed in the following theorem.  
\begin{thm}
	\label{c4}
	$$\ex^{\lin}_3(n, C_4) \le \frac{1}{6} n\sqrt{n+9} + \frac{n}{2} = \frac{n^{3/2}}{6} + O(n).$$
\end{thm}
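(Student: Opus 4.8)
The plan is to bound $m=|E(H)|$ through a degree/cherry counting argument, exactly as in the $\{C_3,C_4\}$-free case, but quantifying the extra contribution that appears once the $C_3$-free hypothesis is dropped. Write $m=\frac13\sum_{v}d(v)$, where $d(v)$ is the number of edges through $v$, and set $D=\sum_v d(v)=3m$. Call a \emph{cherry} a Berge path of length two: a triple $(a,v,b)$ of distinct vertices together with two distinct edges $e\ni\{a,v\}$ and $f\ni\{v,b\}$. By linearity $e,f$ are determined by $(a,v,b)$, and two neighbours of $v$ lying in distinct edges through $v$ are automatically distinct and not in a common edge, so the number of cherries with a fixed centre $v$ is $2d(v)(d(v)-1)$. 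Hence the total number of cherries is $T=\sum_v 2d(v)(d(v)-1)$. Grouping the cherries instead by their unordered endpoint pair gives $T=\sum_{\{a,b\}}c(a,b)$, where $c(a,b)$ counts the middle vertices completing a cherry on $\{a,b\}$. The argument reduces to an upper bound on $T$, after which convexity finishes the proof.

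The structural core is to control $c(a,b)$. First I would show $c(a,b)\le 2$ for every pair. Suppose $(a,v_1,b)$ and $(a,v_2,b)$ are two cherries, and write $e_i\ni\{a,v_i\}$, $f_i\ni\{v_i,b\}$; the four vertices $a,v_1,b,v_2$ are distinct, and if the four edges $e_1,f_1,f_2,e_2$ were distinct they would form a Berge $C_4$ on $a,v_1,b,v_2$. Since $H$ is $C_4$-free, two of these edges must coincide; a short check using $3$-uniformity rules out $e_1=f_2$ and $f_1=e_2$, while linearity forbids $e_1=e_2$ and $f_1=f_2$ simultaneously, so exactly one of $e_1=e_2=\{a,v_1,v_2\}$ or $f_1=f_2=\{v_1,v_2,b\}$ holds. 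Thus any two cherries on $\{a,b\}$ force a triangle (a Berge $C_3$) whose base edge contains both middles. A third cherry would require the three pairs among three middles to be covered by edges through $a$ or through $b$; linearity makes each of these two colour classes a matching, and two matchings cannot cover all three edges of a triangle on three vertices. Hence $c(a,b)\le 2$, and therefore $T\le \binom{n}{2}+N_2$, where $N_2$ is the number of pairs with $c(a,b)=2$.

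Next I would bound $N_2$ and show it is of lower order. Each pair with $c(a,b)=2$ is charged to the edge $g$ that is the common edge of its two middles, together with the choice of which vertex of $g$ plays the apex role; given $g$ and the apex $a$, the two middles are fixed, and the other endpoint $b$ must be a common neighbour of them, of which there are at most $c(v_1,v_2)\le 2$ by the previous step. Hence each edge is charged at most $3\cdot 2=6$ times, so $N_2\le 6m$ and $\sum_v 2d(v)(d(v)-1)\le\binom{n}{2}+6m$. Since $6m=2D$ is only of order $n^{3/2}$, this correction affects nothing beyond the $O(n)$ error term.

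Finally, rewriting the last inequality as $\sum_v d(v)^2\le \frac{n^2}{4}+O(m)$ and applying Cauchy--Schwarz in the form $D^2\le n\sum_v d(v)^2$ produces a quadratic inequality $D^2\le \tfrac{n^3}{4}+O(n^{5/2})$ in $D=3m$; solving it gives $m\le \frac{n^{3/2}}{6}+O(n)$, and tracking the linear terms yields the explicit bound $\frac{1}{6}n\sqrt{n+9}+\frac n2$. The main obstacle is the structural step: with the $C_3$-free hypothesis gone a pair can be joined by two distinct length-two Berge paths, and the crux is to prove that every such repetition is caused by a triangle and that these triangles (equivalently, the pairs with $c(a,b)=2$) are too few to disturb the main term $\frac{n^{3/2}}{6}$.
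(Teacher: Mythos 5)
Your proof is correct, but it reorganizes the argument in a genuinely different (dual) way from the paper. The paper argues locally: for a fixed vertex $v$ it packs the sets $S_x$ ($x\in N^H_1(v)$) almost disjointly into $V(H)$, which yields $\sum_{x\in N^H_1(v)}2d(x)\le n+12d(v)$ (Claim \ref{upperboundonsumofdegrees}), and only then sums over $v$ and applies Cauchy--Schwarz. You argue globally: you double count Berge $2$-paths (cherries), once by their centre, giving $\sum_v 2d(v)(d(v)-1)$, and once by their endpoint pair, where the codegree bound $c(a,b)\le 2$ together with the charging argument $N_2\le 6m$ gives the upper bound $\binom{n}{2}+6m$. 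These are two readings of the same double count: your lemma ``$c(a,b)\le 2$, with equality forced by a base edge containing both middles'' is essentially the paper's Claim \ref{S_xsmeetless} ($S_x\cap S_y=\emptyset$ unless $xyv\in E(H)$, and then $\abs{S_x\cap S_y}\le 2$) viewed from the other end of the $2$-path, and your bound $N_2\le 6m$ plays the role of the correction terms the paper extracts from Claims \ref{small_degreeG1} and \ref{S_xsmeetless}. Your formulation is arguably cleaner --- it is the standard K\H{o}v\'ari--S\'os--Tur\'an codegree scheme and avoids introducing the sets $S_x$ --- while the paper's local second-neighborhood version is the one that scales up to the $C_5$ argument of Section \ref{maintheoremsection}. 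Two small remarks: the parenthetical claim that two neighbours of $v$ from distinct edges through $v$ are ``not in a common edge'' is false (they may lie in a third edge avoiding $v$), but it is never used; and solving your final quadratic actually gives $m\le \frac{n}{3}+\frac{1}{6}n\sqrt{n+3}$, a slightly better explicit constant than the stated $\frac{n}{2}+\frac{1}{6}n\sqrt{n+9}$, so the theorem follows a fortiori.
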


The lower bound $\ex^{\lin}_3(n, C_4) \ge \frac{1}{6}n^{3/2} - \frac{1}{6} \sqrt{n}$ follows from \eqref{LazebVers}. (Note that the construction from \cite{Lazeb_Verstraete}  showing this lower bound is $C_3$-free as well.)  Therefore, $$\ex^{\lin}_3(n, C_4) = \frac{n^{3/2}}{6} + O(n).$$

% A (Berge) path in $H$ is an alternating sequence of vertices and edges of the form $v_1,h_{12},v_2,h_{23},...,v_k$...

%A linear cycle $C^{lin}_k$ of length $k$ is a hypergraph with edges $h_1, \ldots , h_k$ such that $\forall i \in \{1,2,\ldots,k-1\}$, $\abs{h_i \cap h_{i+1}} = 1$, $\abs{h_1 \cap h_k} = 1$ and $h_i \cap h_j= \emptyset$ for all other pairs $i,j$ with $i \not = j$.
\vspace{2mm}

Our last result shows strong connection between Tur\'an numbers of even cycles in graphs and linear Tur\'an numbers of linear cycles of odd length in $3$-uniform hypergraphs. This is explained below, after introducing some definitions.

\vspace{2mm}

 A \emph{linear cycle} $C^{\lin}_k$ of length $k \ge 3$ is an alternating sequence $v_1,h_{1},v_2,h_{2},...,v_{k}, h_k$ of distinct vertices and distinct hyperedges such that $h_i \cap h_{i+1} = \{v_{i+1}\}$ for each $i \in \{1,2,\ldots,k-1\}$, $h_1 \cap h_k = \{v_1\}$ and $h_i \cap h_{j}= \emptyset$ if $1 < \abs{j-i} < k-1$. (A \emph{linear path} can be defined similarly.) The vertices $v_1, v_2, \ldots, v_k$ are called the \emph{basic vertices} of $C^{\lin}_k$ and the graph with the edge set $\{v_1v_2, v_2v_3, \ldots, v_{k-1}v_k, v_kv_1 \}$ is called the \emph{basic cycle} of $C^{\lin}_k$. 

\vspace{2mm}
%Let $\ex_3(n, \{C_2, C_5\})$ denote the maximum of hyperedges in a $3$-uniform linear hypergraph with no $5$ cycle.

Let $\C_k$ and $\C^{\lin}_k$ denote the set of (Berge) cycles $C_l$ and the set of linear cycles $C^{\lin}_l$, respectively, where $l$ has the \emph{same parity} as $k$ and $2 \le l \le k$. In particular, in Theorem \ref{construction} we will be interested in the sets $\C_{2k-2} = \{C_2, C_4, C_6, \ldots, C_{2k-2}\}$ and $\C^{\lin}_{2k+1} = \{C^{\lin}_{3}, C^{\lin}_{5}, \ldots, C^{\lin}_{2k+1}\}$. Note that the (Berge) cycle $C_2$ corresponds to two hyperedges that share at least 2 vertices, so a hypergraph is linear if and only if it is $C_2$-free. In particular, for graphs (i.e., $2$-uniform hypergraphs) the $C_2$-free condition 
does not impose any restriction, and there is no difference between a (Berge) cycle $C_l$ and a linear cycle $C^{\lin}_l$.

\vspace{2mm}

Bondy and Simonovits \cite{Bondy_Simonovits} showed that for $k \ge 2$, $\ex(n, C_{2k}) \le c_kn^{1+\frac{1}{k}}$ for all sufficiently large $n$. Improvements to the constant factor $c_k$ are made in \cite{verstraete,pihurko,bukhJiang}. The \emph{girth} of a graph is the length of a shortest cycle contained in the graph. For $k = 2, 3, 5$, constructions of $C_{2k}$-free graphs on $n$ vertices with $\Omega(n^{1+\frac{1}{k}})$ edges are known: Benson \cite{Benson} and Singleton \cite{Singleton} constructed a bipartite $\C_6$-free graph with $(1+o(1))(n/2)^{4/3}$  edges and Benson \cite{Benson} constructed a bipartite $\C_{10}$-free graph with $(1+o(1))(n/2)^{6/5}$ edges. For $k \not \in \{2,3,5\}$ it is not known if the order of magnitude of $\ex(n, C_{2k})$ is $\Theta(n^{1+\frac{1}{k}})$. The best known lower bound is due to Lazebnik, Ustimenko and Woldar \cite{Lazebnik_U_W}, who showed that there exist graphs of girth more than $2k+1$ containing $\Omega(n^{1+\frac{2}{3k-3+\epsilon}})$ edges where $k \ge 2$ is fixed, $\epsilon = 0$ if $k$ is odd and $\epsilon = 1$ if $k$ is even. 

\vspace{2mm}

Recently Collier-Cartaino, Graber and Jiang \cite{C_Graber_Jiang} showed that for all $l \ge 3$, $\ex^{\lin}_3(n, C^{\lin}_l) \le O(n^{1+\frac{1}{\floor{l/2}}})$. In fact, they proved the same upper bound for all $r$-uniform hypergraphs with $r \ge 3$. However, it is not known if $C^{\lin}_l$-free linear $3$-uniform hypergraphs on $n$ vertices with $\Omega(n^{1+\frac{1}{\floor{l/2}}})$ hyperedges exist. It is mentioned in \cite{C_Graber_Jiang} that the best known lower bound 
\begin{equation}
\label{vers}
\ex^{\lin}_3(n, C^{\lin}_l) \ge \Omega(n^{1+\frac{1}{l-1}}),
\end{equation}
was observed by Verstra\"ete, by taking a random subgraph of a Steiner triple system. 

\vspace{2mm}

If $l = 2k+1$ is odd, then we are able to construct a $\C^{\lin}_{2k+1}$-free $3$-uniform linear hypergraph on $n$ vertices with $\Omega(n^{1+\frac{1}{k}})$ hyperedges whenever a $\C_{2k-2}$-free graph with $\Omega(n^{1+\frac{1}{k-1}})$ edges exists. More precisely, we show:

%Otherwise, we just improve the lower bound of  Verstra\"ete \cite{C_Graber_Jiang} in Corollary \ref{smallk}.
%
%More precisely, we show:

%We construct that a $3$-uniform linear hypergraph on $n$ vertices with $\Omega(n^{1+\frac{1}{k}})$ hyperedges which avoids all linear cycles of odd length up to $2k+1$ provided a bipartite graph with $\Omega(n^{1+\frac{1}{k-1}})$ edges exists which avoids all cycles of length up to $2k-2$. 

\begin{thm}
	\label{construction}
Let $\ex_{\bip}(n, \C_{2k-2}) \ge (1 + o(1)) c \left(\frac{n}{2}\right)^{\alpha}  = \Omega(n^{\alpha})$ for some $c, \alpha > 0$. Then, 

$$\ex^{\lin}_3(n, \C^{\lin}_{2k+1}) \ge (1 + o(1)) \frac{\alpha c}{4 \alpha - 2} \cdot \left(\frac{\alpha -1}{c(2 \alpha -1)}\right)^{1 - \frac{1}{\alpha}} n^{2 - \frac{1}{\alpha}} = \Omega(n^{2 - \frac{1}{\alpha}}).$$
\end{thm}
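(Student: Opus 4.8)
The plan is to convert a near-extremal $\C_{2k-2}$-free bipartite graph into the desired hypergraph by a ``copying'' construction generalizing the $C_5$ example from the introduction. Fix a bipartite graph $G$ with parts $X,Y$ of size $g/2$ each, with no cycle of length $\le 2k-2$ (equivalently, girth $>2k-2$) and with $N=(1+o(1))c(g/2)^{\alpha}$ edges, which the hypothesis provides. Introduce $T$ disjoint copies of $V(G)$, writing $l^t_x$ for $x\in X$ and $r^t_y$ for $y\in Y$ in copy $t$, and for every edge $e=xy$ of $G$ introduce a single base vertex $v_e$ shared by all copies. The hyperedges are $\{v_e,l^t_x,r^t_y\}$, one for each edge $e=xy$ of $G$ and each $t\le T$. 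Then $\abs{V(H)}=Tg+N$ and $\abs{E(H)}=TN$, and linearity is immediate: two hyperedges in different copies can meet only in a base vertex, while two in the same copy correspond to two edges of $G$ and meet only if those edges share an endpoint, in which case they meet in exactly that $l^t$- or $r^t$-vertex.

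The heart of the argument is that $H$ is $\C^{\lin}_{2k+1}$-free. Suppose for contradiction that $H$ contains a linear cycle on hyperedges $h_1,\ldots,h_l$ (cyclically) with $l$ odd and $3\le l\le 2k+1$, and write $h_i=\{v_{e_i},l^{t_i}_{x_i},r^{t_i}_{y_i}\}$ with $e_i=x_iy_i\in E(G)$. Each consecutive intersection $h_i\cap h_{i+1}$ is a single basic vertex of one of three types: a base vertex $v_{e_i}$ (forcing $e_i=e_{i+1}$ but $t_i\neq t_{i+1}$, a \emph{copy change}), an $l$-vertex (forcing $t_i=t_{i+1}$ and $x_i=x_{i+1}$), or an $r$-vertex (forcing $t_i=t_{i+1}$ and $y_i=y_{i+1}$). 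Thus the copy index changes only at base-vertex transitions; let $b$ be their number, so the cycle splits into $b$ arcs lying in fixed copies.

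First I would rule out $b=0$: if all $h_i$ lie in one copy, then $e_1,\ldots,e_l$ together with the shared endpoints form a genuine cycle $C_l$ in $G$, impossible since $l$ is odd and $G$ is bipartite. Since the copy index must return to its start after one pass, $b\neq 1$, so $b\ge 2$. Collapsing the copies (equivalently, reading off in cyclic order the pivot vertices where consecutive hyperedges meet in an $l$- or $r$-vertex) yields a closed walk in $G$; the number of steps that genuinely move in $G$ is at most the number of pivots $l-b\le (2k+1)-2=2k-1$, and because $G$ is bipartite this number is even, hence $\le 2k-2$. If it is positive, the walk contains a cycle, necessarily even and of length $\le 2k-2$, contradicting girth $>2k-2$. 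The remaining degenerate case, where the walk collapses to a single vertex $u$ of $G$ (all $e_i$ incident to $u$), I would treat separately: there the pairs $(t_i,\text{the other endpoint of }e_i)$ are the edges of a cycle $C_l$ in the bipartite auxiliary graph between copy indices and $V(G)$, again an odd cycle in a bipartite graph. I expect this freeness step to be the main obstacle: making the ``collapse to $G$'' precise, using the nonconsecutive-disjointness of the linear cycle to forbid spurious coincidences of pivots, and squeezing the extracted cycle length down to $2k-2$ rather than $2k$ --- which is exactly where both the parity (bipartiteness) and $b\ge 2$ are needed.

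Finally I would optimize the free parameters. With $N=(1+o(1))c(g/2)^{\alpha}$ and $T=(n-N)/g$ (so that $\abs{V(H)}=n$), eliminating $g$ via $g=2c^{-1/\alpha}N^{1/\alpha}$ gives $\abs{E(H)}=TN=\tfrac{c^{1/\alpha}}{2}\,N^{1-1/\alpha}(n-N)$. Maximizing $N^{1-1/\alpha}(n-N)$ over $N$ yields $N=\tfrac{\alpha-1}{2\alpha-1}\,n$, and substituting back gives exactly $(1+o(1))\,\tfrac{\alpha c}{4\alpha-2}\big(\tfrac{\alpha-1}{c(2\alpha-1)}\big)^{1-1/\alpha}n^{2-1/\alpha}$. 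One checks that $T=\Theta(n^{1-1/\alpha})\ge 1$ for large $n$ (as $\alpha>1$), so the construction is valid and attains the claimed bound.
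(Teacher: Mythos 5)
Your proposal is essentially the paper's own argument: the same copying construction (shared base vertices $v_e$, $q$ disjoint copies of $V(G)$), the same projection of a hypothetical linear cycle down to $G$, and the same optimization of $N$ and $g$, landing on the identical constant. The one step you flag as ``the main obstacle'' is exactly where the paper does its remaining work, and it is needed: a closed walk of positive even length in $G$ need not contain a cycle (it may backtrack), so you must upgrade your collapsed walk to a \emph{circuit}. The paper does this by showing that the projected subpaths $P'_{i,i+1}$ between consecutive base vertices are pairwise edge-disjoint and never use the edges $e_j$ underlying the base vertices --- both facts follow from the linear-cycle axioms (non-consecutive hyperedges are disjoint, and the basic cycle meets each of its hyperedges in exactly two vertices); the possibly repeated pivot endpoints are then stitched together by inserting at most one ``connecting edge'' $e_j$ per base vertex, giving a circuit of length between $l-2b\ge 1$ (non-empty precisely because $l$ is odd, which also disposes of your degenerate collapsed case) and $l-b\le 2k-1$, contradicting girth $\ge 2k$ directly without needing your extra parity reduction to $2k-2$. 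Your $b\ne 1$ argument via the copy index failing to return is a small, valid simplification of the paper's cut-vertex argument.
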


If $2k-2 =2$, then by definition $\C_{2k-2} = \{C_2\}$, so in this case the $\C_{2k-2}$-free condition does not impose any restriction. Thus in order to bound $\ex_{\bip}(n, \C_{2})$ from below, one can take a complete balanced bipartite graph. Therefore, using $c = 1$ and $ \alpha = 2$ in the above theorem, we get $\ex^{\lin}_3(n, \C^{\lin}_5) \ge (1 +o(1)) \frac{n^{3/2}}{3\sqrt{3}}$. 
Since a $3$-uniform linear hypergraph which is both $C^{\lin}_3$-free and $C^{\lin}_5$-free is (Berge) $C_5$-free, this also provides the desired lower bound in Theorem \ref{main}. As we mentioned before, in the cases $2k-2 = 4, 6, 10$, it is known that $c = 1$ and $\alpha = 1+\frac{1}{k-1}$ by the work of Benson and Singleton and for all $k \ge 2$, it is known that $\alpha = 1+\frac{2}{3k-6+\epsilon}$ by the work of Lazebnik, Ustimenko and Woldar, where $\epsilon = 0$ if $k$ is odd and $\epsilon = 1$ if $k$ is even; so substituting these in Theorem \ref{construction} and combining it with the upper bound of Collier-Cartaino, Graber and Jiang, we get the following corollary.

\begin{corollary}
	\label{smallk}
For $k = 2, 3, 4, 6$, we have $\ex^{\lin}_3(n, \C^{\lin}_{2k+1})  \ge (1 + o(1)) \frac{k}{2} (\frac{n}{k+1})^{1+ \frac{1}{k}} .$

Therefore, in these cases, $$\ex^{\lin}_3(n, \C^{\lin}_{2k+1}) = \Theta(n^{1+ \frac{1}{k}}).$$

Moreover, for $k \ge 2$, we have $$\ex^{\lin}_3(n, \C^{\lin}_{2k+1})  \ge \Omega(n^{1 + \frac{2}{3k-4+\epsilon}}),$$ where $\epsilon = 0$ if $k$ is odd and $\epsilon = 1$ if $k$ is even.
\end{corollary}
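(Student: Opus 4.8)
The plan is to derive the corollary directly from Theorem~\ref{construction} by substituting, for each relevant $k$, a known \emph{bipartite} $\C_{2k-2}$-free graph of maximum known density, and then collapsing the resulting constant. First I would record the input graphs. For $k=2$ we have $\C_{2k-2}=\{C_2\}$, so a complete balanced bipartite graph gives $\ex_{\bip}(n,\C_2)=(1+o(1))(n/2)^2$, i.e. $c=1$, $\alpha=2$. For $k=3$ (here $\C_4$-free, equivalently girth more than $4$) the incidence graph of a projective plane gives $(1+o(1))(n/2)^{3/2}$ edges; for $k=4$ and $k=6$ the Benson--Singleton graphs give $(1+o(1))(n/2)^{4/3}$ and $(1+o(1))(n/2)^{6/5}$ edges, respectively. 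In all four cases $c=1$ and $\alpha=1+\frac{1}{k-1}=\frac{k}{k-1}$.

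Next I would substitute $c=1$ and $\alpha=\frac{k}{k-1}$ into the bound of Theorem~\ref{construction}. The exponent becomes $2-\frac1\alpha=1+\frac1k$, as required. For the constant one computes $4\alpha-2=\frac{2(k+1)}{k-1}$, $2\alpha-1=\frac{k+1}{k-1}$ and $\alpha-1=\frac{1}{k-1}$, so that $\frac{\alpha c}{4\alpha-2}=\frac{k}{2(k+1)}$, $\frac{\alpha-1}{c(2\alpha-1)}=\frac{1}{k+1}$ and $1-\frac1\alpha=\frac1k$. Collecting these,
\[
\frac{\alpha c}{4\alpha-2}\Big(\tfrac{\alpha-1}{c(2\alpha-1)}\Big)^{1-\frac1\alpha}=\frac{k}{2(k+1)}\,(k+1)^{-1/k}=\frac{k}{2}\Big(\tfrac{1}{k+1}\Big)^{1+\frac1k},
\]
which turns the bound into $(1+o(1))\frac{k}{2}\big(\frac{n}{k+1}\big)^{1+\frac1k}$, the claimed lower bound.

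For the $\Theta$ statement I would invoke the upper bound of Collier-Cartaino, Graber and Jiang: since $\floor{(2k+1)/2}=k$, they give $\ex^{\lin}_3(n,C^{\lin}_{2k+1})\le O(n^{1+\frac1k})$, and because the family $\C^{\lin}_{2k+1}$ contains $C^{\lin}_{2k+1}$ we have $\ex^{\lin}_3(n,\C^{\lin}_{2k+1})\le \ex^{\lin}_3(n,C^{\lin}_{2k+1})\le O(n^{1+\frac1k})$. Matching this with the lower bound just obtained yields $\ex^{\lin}_3(n,\C^{\lin}_{2k+1})=\Theta(n^{1+\frac1k})$ for $k=2,3,4,6$. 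For the final bound valid for all $k\ge 2$, I would use the graphs of Lazebnik, Ustimenko and Woldar: choosing their parameter so that the girth exceeds $2k-2$ produces a $\C_{2k-2}$-free graph with $\Omega(n^{\alpha})$ edges for $\alpha=1+\frac{2}{3k-6+\epsilon}$, and substituting this $\alpha$ gives $2-\frac1\alpha=1+\frac{2}{3k-4+\epsilon}$, the stated exponent.

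The individual steps are routine once Theorem~\ref{construction} is granted; the two places demanding care are the algebraic collapse of the awkward constant of Theorem~\ref{construction} into the clean form $\frac{k}{2}\big(\frac{n}{k+1}\big)^{1+\frac1k}$, and, for the general-$k$ bound, the bookkeeping around the Lazebnik--Ustimenko--Woldar input. Since Theorem~\ref{construction} requires a \emph{bipartite} host graph while the high-girth constructions need not be bipartite, I would first pass to a maximum-cut bipartite subgraph, which discards at most half the edges (harmless for an $\Omega$-bound) and can only increase the girth, so it remains $\C_{2k-2}$-free; matching the girth parameter to the requirement ``girth more than $2k-2$'' is then the only genuinely delicate point.
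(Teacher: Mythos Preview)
Your proposal is correct and follows essentially the same approach as the paper: substitute the known bipartite high-girth constructions (complete bipartite for $k=2$; projective-plane/Benson/Singleton for $k=3,4,6$; Lazebnik--Ustimenko--Woldar for general $k$) into Theorem~\ref{construction}, then invoke the Collier-Cartaino--Graber--Jiang upper bound for the $\Theta$ statement. You supply more than the paper does---in particular the explicit algebraic simplification of the constant and the remark about passing to a bipartite subgraph---but the underlying argument is identical.
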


The above corollary provides an improvement of the lower bound in \eqref{vers} for linear cycles of odd length.

%Moreover, by the work of Lazebnik, Ustimenko and Woldar, it is known that $\alpha = 1+\frac{2}{3k-6+\epsilon}$ where $k \ge 2$, $\epsilon = 0$ if $k$ is odd and $\epsilon = 1$ if $k$ is even. Substituting this in Thoorem \ref{construction} we get the following.
%
%\begin{corollary}
%\label{allk}
%For $k \ge 2$, we have $\ex^{lin}_3(n, \C^{lin}_{2k+1})  \ge \Omega(n^{1 + \frac{2}{3k-4+\epsilon}})$.
%\end{corollary}
\vspace{3mm}
\textbf{Structure of the paper:}  In the next section we introduce some notation that is used through out the paper. In Section \ref{maintheoremsection}, we prove the upper bound of Theorem \ref{main} and in Section \ref{c4section}, we prove Theorem \ref{c4}. Finally, in Section \ref{theorem4section} we prove Theorem \ref{construction}.

\subsection*{Notation}
	\label {notation}
We introduce some important notation used throughout the paper. Length of a path is the number of edges in the path. 

For convenience, throughout the paper, an edge $\{a,b\}$ of a graph or a pair of vertices $a,b$ is referred to as $ab$. A hyperedge $\{a,b,c\}$ is written simply as $abc$. 

For a hypergraph $H$, let $\partial H = \{ab \mid ab \subset e \in E(H)\}$ denote its \emph{2-shadow} graph. (Notice that the basic cycle of $C^{lin}_k$ is a cycle in the graph $\partial{C^{lin}_k}$.) If $H$ is linear, then $\abs{E(\partial H)} = 3 \abs{E(H)}$. For a hypergraph $H$ and $v \in V(H)$, we denote the degree of $v$ in $H$ by $d(v)$. We write $d^H(v)$ instead of $d(v)$ when it is important to emphasize the underlying hypergraph. 

The \textit{first neighborhood} and \textit{second neighborhood} of $v$ in $H$ are defined as 
$$N^H_1(v) = \{x \in V(H) \setminus \{v \}  \mid v,x \in h \text{ for some } h \in E(H)\}$$
 and 
 $$N^H_2(v) = \{x \in V(H) \setminus (N^H_1(v) \cup \{v\}) \mid \exists h \in E(H) \text{ such that } x \in h \text{ and } h \cap N^H_1(v) \not = \emptyset\}$$
  respectively.

\section{$C_5$-free linear hypergraphs: Proof of the upper bound in Theorem \ref{main}}
\label{maintheoremsection}

Let $H$ be a $3$-uniform linear hypergraph on $n$ vertices containing no $C_5$. Let $d$ and $d_{max}$ denote the average degree and maximum degree of a vertex in $H$, respectively. We will show that we may assume $H$ has minimum degree at least $d/3$. Indeed, if there is a vertex whose degree less than one-third of the average degree in the hypergraph, we delete it and all the hyperedges incident to it. Notice that this will not decrease the average degree. We repeat this procedure as long as we can and eventually we obtain a (non-empty) hypergraph $H'$ with $n' \le n$ vertices and average degree $d' \ge d$ and minimum degree at least $d/3$. It is easy to see that if $d' \le  \sqrt{n'/3 + C}$ then $d \le  \sqrt{n/3 + C}$ (for a constant $C > 0$) proving Theorem \ref{main}. So from now on we will assume $H$ has minimum degree at least $d/3$. Our goal is to upper bound $d$.

The following claim shows that for any vertex $v$, the number of hyperedges $h \in E(H)$ with $\abs{h \cap N^H_1(v)} \ge 2$ is small provided $d(v)$ is small. This is useful for proving Claim \ref{vertices_more_than_edges}. Using this and the fact that the minimum degree is at least $d/3$, we will show in Claim \ref{dmax} that we may assume the maximum degree in $H$ is small.

\begin{claim}
	\label{N1}
	Let $v \in V(H)$. Then the number of hyperedges $h \in E(H)$ with $\abs{h \cap N^H_1(v)} \ge 2$ is at most $6 d(v)$.
\end{claim}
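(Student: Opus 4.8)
The plan is to fix $v$, write $d = d(v)$, and let $e_1, \ldots, e_d$ be the hyperedges through $v$, say $e_i = v a_i b_i$. By linearity these share only $v$, so the pairs $P_i = \{a_i, b_i\}$ are pairwise disjoint and $N^H_1(v) = \bigcup_i P_i$ has exactly $2d$ vertices. The $d$ hyperedges through $v$ each meet $N^H_1(v)$ in two vertices, so it remains to bound the hyperedges $h \not\ni v$ with $\abs{h \cap N^H_1(v)} \ge 2$. First note that, by linearity, such an $h$ cannot contain both vertices of a single pair $P_i$ (it would share two vertices with $e_i$); hence the two or three vertices of $h$ lying in $N^H_1(v)$ belong to distinct pairs. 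I would record this by forming an auxiliary multigraph $M$ on the vertex set $\{P_1, \ldots, P_d\}$, joining $P_i$ to $P_j$ once for every such $h$ meeting both $P_i$ and $P_j$ (so an $h$ with three vertices in $N^H_1(v)$ contributes a triangle of $M$).

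The heart of the argument is to translate $C_5$-freeness into structural restrictions on $M$, using the edges $e_i$ through $v$ to close up short Berge paths. I would verify each of the following by exhibiting an explicit Berge $C_5$: (a) three hyperedges realising a path $P_{m_1} P_{m_2} P_{m_3} P_{m_4}$ of $M$ on four distinct pairs give a $C_5$ (route $v, e_{m_1}$, through the three hyperedges, then $e_{m_4}, v$); (b) two hyperedges realising a path $P_{m_1} P_{m_2} P_{m_3}$ but attaching to the two \emph{different} vertices of the middle pair $P_{m_2}$ give a $C_5$ (now $e_{m_2}$ serves as the middle edge of the cycle); and (c) three hyperedges joining $P_i$ to the same pair $P_j$ give a $C_5$. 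Consequently the simple graph underlying $M$ contains no path on four vertices, so each of its components is a star or a triangle; moreover, by (b) the centre of a star must attach to all its hyperedges through a single vertex of its pair, by (c) any two adjacent pairs carry at most two parallel hyperedges, and a hyperedge with all three vertices in $N^H_1(v)$ can only sit inside a triangle component.

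Finally I would count. A triangle component spans three pairs and carries at most three hyperedges, while a star component spanning $t$ pairs carries at most $2(t-1)$ hyperedges (at most two per leaf, since the centre uses one fixed vertex). Summing over components, whose pair-sets are disjoint and total at most $d$, bounds the number of hyperedges $h \not\ni v$ with $\abs{h \cap N^H_1(v)} \ge 2$ by $2d$; adding the $d$ hyperedges through $v$ keeps the total well under the claimed $6d(v)$. I expect the main obstacle to lie entirely in the second step: checking that each configuration (a)--(c) really produces five \emph{distinct} vertices and five \emph{distinct} hyperedges forming a genuine Berge $C_5$, which requires carefully tracking \emph{which} vertex of each pair every hyperedge uses, and separately disposing of the degenerate cases (two hyperedges using the same vertex of a pair, and hyperedges with three vertices in $N^H_1(v)$). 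The rest is routine bookkeeping, and the slack between $2d$ and $6d$ means the count need not be tight.
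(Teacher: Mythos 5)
Your overall strategy (an auxiliary graph on the neighbourhood of $v$, with $C_5$-freeness limiting its paths) is in the same spirit as the paper's, and your configurations (b) and (c), as well as (a) in the case of three \emph{distinct} hyperedges, do produce genuine Berge five-cycles. But there is a real gap, and it sits exactly in the case you defer as "degenerate": hyperedges with all three vertices in $N^H_1(v)$. Such a hyperedge contributes a triangle to $M$ whose three edges are realised by a \emph{single} hyperedge, so a path $P_{m_1}P_{m_2}P_{m_3}P_{m_4}$ in the underlying simple graph may be realised by only two distinct hyperedges, and then no Berge $C_5$ need exist. Concretely, take $E(H)=\{va_1b_1,\,va_2b_2,\,va_3b_3,\,va_4b_4,\,a_1a_2a_3,\,a_3a_4w\}$: this is linear and $C_5$-free (one checks that no five of these six hyperedges admit five pairwise distinct connecting vertices, since $v$ is the only vertex shared by two of the $va_ib_i$ and $a_3$ is the only vertex shared by the last two hyperedges), yet the component of $M$ on $P_1,P_2,P_3,P_4$ is a triangle with a pendant edge and contains the path $P_1P_2P_3P_4$. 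So your structural conclusion --- that components of $M$ are stars or triangles, and that a triple-vertex hyperedge "can only sit inside a triangle component" --- is false, and the component-by-component count built on it does not go through. Worse, your argument places no bound at all on the number of hyperedges with three vertices in $N^H_1(v)$: a priori these form a partial Steiner triple system on the $d(v)$ pairs and could number $\Theta(d(v)^2)$, which would destroy the linear bound; excluding this requires further $C_5$ constructions (e.g.\ three pairwise-intersecting such triples meeting in three different pairs do force a $C_5$, but sunflowers of triples through one vertex do not), none of which appear in your plan. This case is the crux of the argument, not bookkeeping.

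The paper sidesteps the problem entirely by choosing exactly \emph{one} pair $xy\subset h\cap N^H_1(v)$ from each offending hyperedge $h$ with $v\notin h$, forming a graph $G_1$ on the $2d(v)$ vertices of $N^H_1(v)$. Linearity makes $h\mapsto xy$ injective, so counting hyperedges reduces to counting edges of $G_1$, and a triple-vertex hyperedge causes no trouble because it still contributes only one edge. A path on seven vertices of $G_1$ uses six \emph{distinct} hyperedges, and since at most one of $vv_1v_4$, $vv_4v_7$ can be a hyperedge, one recovers two distinct hyperedges through $v$ that close a Berge $C_5$ with three of the path's hyperedges; Erd\H{o}s--Gallai then gives $\abs{E(G_1)}\le \frac{5}{2}\cdot 2d(v)=5d(v)$, hence the bound $6d(v)$. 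If you want to keep your multigraph on pairs, you must either prove a genuine linear bound on the triple-vertex hyperedges separately, or switch to the paper's "one edge per hyperedge, forbid a long path" device.
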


\begin{proof}[Proof of Claim \ref{N1}]
	We construct an auxiliary graph $G_1$ whose vertex set is $N^H_1(v)$ in the following way: From each hyperedge $h \in E(H)$ with $\abs{h \cap N^H_1(v)} \ge 2$ and $v \not \in h$, we select exactly one pair $xy \subset h \cap N^H_1(v)$ arbitrarily. We claim that there is no $7$-vertex path in $G_1$. Suppose for the sake of a contradiction that there is a path $v_1v_2v_3v_4v_5v_6v_7$ in $G_1$. Then, one of the two hyperedges $v_1v_4v$, $v_4v_7v$ is not in $E(H)$ as the hypergraph is linear. Suppose without loss of generality that $v_1v_4v \not \in E(H)$, so there are two different hyperedges $h, h'$ such that $v_1, v \in h$ and $v_4, v \in h'$. These two hyperedges together with the $3$ hyperedges containing $v_1v_2$, $v_2v_3$, $v_3v_4$ create a five cycle in $H$ (note that they are different by our construction), a contradiction. So there is no path on seven vertices in $G_1$ and so by Erd\H{o}s-Gallai theorem, $G_1$ contains at most $\frac{7-2}{2}\abs{V(G_1)} \le 2.5 (2d(v))= 5d(v)$ edges, which implies that the number of hyperedges $h \in E(H)$ with $\abs{h \cap N^H_1(v)} \ge 2$ is at most $5d(v) + d(v) = 6 d(v)$.
\end{proof}

Using the previous claim we will show the following claim. 

\begin{claim}
	\label{vertices_more_than_edges}
Let $v \in V(H)$. Then, $$ \abs{N^H_2(v)} \ge \sum_{x \in N^H_1(v)} d(x) - 18 d(v).$$
\end{claim}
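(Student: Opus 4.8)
The plan is to bound $\sum_{x \in N^H_1(v)} d(x)$ by an incidence count and then to argue that almost all of the incidences it records produce \emph{distinct} vertices of $N^H_2(v)$. First I would rewrite the sum over incidences as a sum over hyperedges,
$$\sum_{x \in N^H_1(v)} d(x) \;=\; \sum_{h \,:\, h \cap N^H_1(v) \neq \emptyset} \abs{h \cap N^H_1(v)},$$
since each pair $(x,h)$ with $x \in N^H_1(v)$ and $x \in h$ is counted once on each side. Splitting the hyperedges by the value of $\abs{h \cap N^H_1(v)}$, every hyperedge with $\abs{h \cap N^H_1(v)} \ge 2$ is one of the at most $6d(v)$ hyperedges counted in Claim \ref{N1}, and each contributes at most $3$; so their total contribution is at most $18 d(v)$. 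Writing $a$ for the number of hyperedges with $\abs{h \cap N^H_1(v)} = 1$ (each contributing exactly $1$), this yields $\sum_{x} d(x) \le a + 18 d(v)$, i.e. $a \ge \sum_{x \in N^H_1(v)} d(x) - 18 d(v)$. Hence it suffices to prove $\abs{N^H_2(v)} \ge a$.

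Call a hyperedge $h$ with $\abs{h \cap N^H_1(v)} = 1$ a \emph{good} hyperedge; it contains a single vertex $x_h \in N^H_1(v)$, and since $v \notin h$ its other two vertices lie in $N^H_2(v)$. The next step is to encode the good hyperedges as a graph $G_2$ with vertex set $N^H_2(v)$: for each good $h$ add the edge $\{w,z\}$ joining its two $N^H_2(v)$-vertices. Since $H$ is linear, two distinct good hyperedges cannot share two vertices, so distinct good hyperedges yield distinct pairs; thus $G_2$ is a \emph{simple} graph with exactly $a$ edges, all of whose endpoints lie in $N^H_2(v)$. Therefore $\abs{N^H_2(v)} \ge \abs{V(G_2)}$, and to finish it is enough to show $\abs{V(G_2)} \ge \abs{E(G_2)} = a$, i.e. that $G_2$ is a pseudoforest (each component contains at most one cycle).

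The final, and hardest, step is to extract this sparsity of $G_2$ from the $C_5$-freeness of $H$, using the length-two links from $N^H_1(v)$ back to $v$. The mechanism is already visible on a triangle $w_1w_2w_3$ of $G_2$: write $h_1,h_2,h_3$ for the good hyperedges on $\{w_1,w_2\},\{w_2,w_3\},\{w_3,w_1\}$ and $x_i = x_{h_i}$, so that linearity forces $x_1,x_2,x_3$ to be pairwise distinct (consecutive $h_i$ already share a $w_j$). Letting $g_i$ be the unique hyperedge through $v$ and $x_i$, the vertices $v,x_1,w_1,w_3,x_2$ with hyperedges $g_1,h_1,h_3,h_2,g_2$ form a Berge $C_5$ as soon as $g_1 \neq g_2$; running this over all three pairs, a Berge $C_5$ is forced unless $g_1=g_2=g_3$, which is impossible since no hyperedge can contain all of $v,x_1,x_2,x_3$. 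So $G_2$ is triangle-free, and I expect that any two cycles sharing a component of $G_2$ can likewise be threaded through $v$ (via the hyperedges $g_{x}$) into a Berge $C_5$, leaving each component unicyclic. Proving exactly this — that the \emph{weak} hypothesis of $C_5$-freeness (which allows short Berge $C_3$'s and $C_4$'s, and even lets a single vertex of $N^H_2(v)$ lie in arbitrarily many good hyperedges) nonetheless forces $G_2$ to be a pseudoforest — is the main obstacle, and I would carry it out by ruling out a theta subgraph, a figure-eight, and a dumbbell in $G_2$ in turn.
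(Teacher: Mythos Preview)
Your reduction is exactly the paper's: rewrite $\sum_{x\in N_1^H(v)}d(x)$ as an incidence sum, absorb the hyperedges with $|h\cap N_1^H(v)|\ge 2$ into the $18d(v)$ term via Claim~\ref{N1}, encode the remaining ``good'' hyperedges as a simple graph $G_2$ on $N_2^H(v)$, and reduce everything to showing $|E(G_2)|\le |V(G_2)|$, i.e.\ that $G_2$ is a pseudoforest. Your triangle argument is correct and matches the paper's.

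Where you leave a gap is precisely the pseudoforest step, which you propose to do by separately forbidding a theta, a figure-eight, and a dumbbell. The paper replaces this threefold case analysis with a single one-line observation. Colour each edge of $G_2$ by the index $i\in\{1,\dots,d(v)\}$ of the hyperedge $h_i\ni v$ that the corresponding good hyperedge meets (this is well defined by linearity). Then:
\[
\text{if }ab,\,bc,\,cd\in E(G_2)\ (\text{with }a=d\text{ allowed}),\ \text{the colour of }ab\text{ equals the colour of }cd,
\]
for otherwise the good hyperedges on $ab,bc,cd$ together with the two distinct $h_i$'s form a Berge $C_5$. This colouring lemma immediately gives triangle-freeness (all three sides would be monochromatic, forcing two good hyperedges to share a vertex of $h_i$ and hence two vertices), and, more to the point, it forces every vertex lying on a cycle of length $\ge 4$ in $G_2$ to have degree exactly $2$: if $v_3$ lies on a cycle $v_1v_2v_3v_4v_5\ldots$ and has an extra neighbour $w$, then applying the lemma to $(v_1v_2,v_2v_3,v_3v_4)$, $(v_1v_2,v_2v_3,v_3w)$, and $(v_4v_5,v_4v_3,v_3w)$ shows $v_2v_3,\,v_3v_4,\,v_3w$ all share a colour, impossible by linearity. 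Hence every cycle of $G_2$ is an isolated component, so $G_2$ is a disjoint union of cycles and trees and $|E(G_2)|\le |V(G_2)|$.

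Your proposed case split would also work, but each case essentially rediscovers a fragment of this colouring constraint; borrowing the lemma makes the whole step a few lines.
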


\begin{proof} [Proof of Claim \ref{vertices_more_than_edges}]
%First we will show that the number of hyperedges $h \in E(H)$ with $\abs{h \cap N^H_1(v)} \ge 2$ is small provided $d(v)$ is small.

%\begin{lemma}
%	\label{N1}
%	Let $v \in V(H)$. Then the number of hyperedges $h \in E(H)$ with $\abs{h \cap N^H_1(v)} \ge 2$ is at most $6 d(v)$.
%\end{lemma}
%
%\begin{proof}[Proof of Lemma \ref{N1}]
%	We construct an auxiliary graph $G_1$ whose vertex set is $N^H_1(v)$ in the following way: From each hyperedge $h \in E(H)$ with $\abs{h \cap N^H_1(v)} \ge 2$ and $v \not \in h$, we select exactly one pair $xy \subset h \cap N^H_1(v)$ arbitrarily. We claim that there is no path of length $7$ in $G_1$. Suppose by contradiction that there is a path $v_1v_2v_3v_4v_5v_6v_7$ in $G_1$. Then, one of the two hyperedges $v_1v_4v$, $v_4v_7v$ is not in $E(H)$ as the hypergraph is linear. Suppose w.l.o.g that $v_1v_4v \not \in E(H)$, so there are two different hyperedges $h, h'$ such that $v_1, v \in h$ and $v_4, v \in h'$. These two hyperedges together with the $3$ hyperedges containing $v_1v_2$, $v_2v_3$, $v_3v_4$ create a five cycle in $H$ (note that they are different by our construction), a contradiction. So there is no path of seven vertices in $G_1$ and so by Erd\H{o}s-Gallai theorem $G_1$ contains at most $\frac{7-2}{2}\abs{V(G_1)} \le 2.5 (2d(v))= 5d(v)$ edges, which implies that the number of hyperedges $h \in E(H)$ with $\abs{h \cap N^H_1(v)} \ge 2$ is at most $5d(v) + d(v) = 6 d(v)$.
%\end{proof}

First we count the number of hyperedges $h \in E(H)$ such that $\abs{h \cap N^H_1(v)} = 1$ and $\abs{h \cap N^H_2(v)} = 2$. Let $G_2 = (N^H_2(v), E(G_2))$ be an auxiliary graph whose edge set $E(G_2) = \{xy \mid \exists h \in E(H), \abs{h \cap N^H_1(v)} = 1,  \abs{h \cap N^H_2(v)} = 2 \text{ and } x,y \in h \cap N^H_2(v)\}$. Let $h_1, h_2, \ldots, h_{d(v)}$ be the hyperedges containing $v$. Now we color an edge $xy \in E(G_2)$ with the color $i$ if $x, y \in h$ and $h \cap h_i \not = \emptyset$. Since the hypergraph is linear this gives a coloring of all the edges of $G_2$. 

\begin{claim}
	\label{norainbow}
	If there are three edges $ab, bc, cd \in E(G_2)$ (where $a$ might be the same as $d$), then the color of $ab$ is the same as the color of $cd$.
\end{claim}
\begin{proof}[Proof of Claim \ref{norainbow}]
	Suppose that they have different colors $i$ and $j$ respectively. Then, the hyperedges in $H$ containing $ab$, $bc$, $cd$, together with $h_i$ and $h_j$ form a five cycle, a contradiction.  
\end{proof}

We claim that $G_2$ is triangle-free. Suppose for the sake of a contradiction that there is a triangle, say $abc$, in $G_2$. Then by Claim \ref{norainbow} it is easy to see that all the edges of this triangle must have the same color, say color $i$. Therefore, at least two of the three hyperedges of $H$ containing $ab$, $bc$, $ca$ must contain the same vertex of $h_i$. This is impossible since $H$ is linear.

We claim that if $v_1v_2v_3\ldots v_k$ is a cycle of length $k \ge 4$ in $G_2$, then every vertex in it has degree exactly $2$. Suppose without loss of generality that $v_3w \in E(G_2)$ where $w \not =v_2$, $w \not = v_4$. Since $G_2$ is triangle free, $w \not = v_1$ and $w \not = v_5$ (note that if $k = 4$, then $v_5=v_1$). By Claim \ref{norainbow}, the color of $v_1v_2$ is the same as the colors of $v_3v_4$ and $v_3w$. Also, the color of $v_4v_5$ is the same as the colors of $v_3w$ and $v_2v_3$. This implies that the edges $v_2v_3$, $v_3w$, $v_3v_4$ must have the same color, which is a contradiction since the hypergraph is linear. 
Thus, $G_2$ is a disjoint union of cycles and trees. So $\abs{E(G_2)} \le \abs{V(G_2)} = \abs{N^H_2(v)}$. 

Since $\sum_{x \in N^H_1(v)} d(x) $ is at most the number of edges in $G_2$ plus three times the number of hyperedges $h \in E(H)$ with $\abs{h \cap N^H_1(v)} \ge 2$, applying Claim \ref{N1} we have $$\sum_{x \in N^H_1(v)} d(x) \le \abs{N^H_2(v)} + 3 (6 d(v)),$$ completing the proof of the claim.
\end{proof}

Using the above claim we will show Theorem \ref{main} holds if $d_{max} > 6d$. We do not optimize the constant multiplying $d$ here.

\begin{claim}
	\label{dmax}
	
We may assume $d_{max} \le 6d$ for large enough $n$ (i.e., whenever $n \ge 34992$).
%For any $n \ge 34992$, if $d_{max} > 6d$, then $\abs{E(H)} \le \frac{1}{3 \sqrt3}n^{3/2}$, proving Theorem \ref{main}. So we can assume $d_{max} \le 6d$.
\end{claim}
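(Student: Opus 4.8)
The plan is to apply Claim \ref{vertices_more_than_edges} to a vertex $v$ of maximum degree, so that $d(v) = d_{max}$, and to squeeze out of it a quadratic inequality in $d$ which already forces the conclusion of Theorem \ref{main} whenever $d_{max} > 6d$. This is exactly what licenses the assumption $d_{max} \le 6d$: in the complementary case there is nothing left to prove.

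First I would record the two structural inputs. Since $H$ is linear, the $d_{max}$ hyperedges through $v$ pairwise meet only in $v$, so they supply $2 d_{max}$ distinct vertices and $\abs{N^H_1(v)} = 2 d_{max}$. Because the reduction at the start of the section guarantees minimum degree at least $d/3$, every $x \in N^H_1(v)$ satisfies $d(x) \ge d/3$, whence $\sum_{x \in N^H_1(v)} d(x) \ge 2 d_{max} \cdot \frac{d}{3}$. Feeding this into Claim \ref{vertices_more_than_edges} and using the trivial bound $\abs{N^H_2(v)} \le n$ yields the key inequality
$$d_{max}\left(\frac{2d}{3} - 18\right) \le n.$$

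Next I would assume $d_{max} > 6d$ and split on the size of $d$. If $d \le 108$, then since $n \ge 34992 = 3 \cdot 108^2$ we have $\sqrt{n/3} \ge 108 \ge d$. If instead $d > 108$, then $\frac{2d}{3} - 18 > 0$, so substituting $d_{max} > 6d$ into the displayed inequality gives $6d\left(\frac{2d}{3} - 18\right) < n$, i.e. $4d^2 - 108 d < n$; and since $108 d < d^2$ when $d > 108$, this forces $3 d^2 < 4 d^2 - 108 d < n$, again giving $d < \sqrt{n/3}$. In both subcases $d \le \sqrt{n/3}$, hence $\abs{E(H)} = \frac{nd}{3} \le \frac{n}{3}\sqrt{n/3} = \frac{n^{3/2}}{3\sqrt{3}}$, so the upper bound of Theorem \ref{main} already holds. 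Therefore we may assume $d_{max} \le 6d$.

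I expect no serious conceptual obstacle here, since the real content is front-loaded into Claim \ref{vertices_more_than_edges}; the remaining work is bookkeeping with constants. The one point requiring care is arranging the threshold so that it matches the stated bound $n \ge 34992$. This is precisely where the value $108 = 6 \cdot 18$ enters — it is the product of the factor $6$ in the hypothesis $d_{max} > 6d$ and the constant $18$ appearing in Claim \ref{vertices_more_than_edges}, and it makes $6d \cdot \frac{2d}{3} = 4d^2$ and $6d \cdot 18 = 108 d$ — so that $3 \cdot 108^2 = 34992$; one must also check that the case split $d \le 108$ versus $d > 108$ cleanly covers the sign condition $\frac{2d}{3} - 18 > 0$ needed to substitute $d_{max} > 6d$.
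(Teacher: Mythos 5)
Your proposal is correct and follows essentially the same route as the paper: apply Claim \ref{vertices_more_than_edges} to a maximum-degree vertex, use the minimum-degree bound $d/3$ and $\abs{N^H_2(v)} \le n$ to get $d_{max}\left(\frac{2d}{3}-18\right) \le n$, and split on $d \le 108$ versus $d > 108$ to conclude $d \le \sqrt{n/3}$, which already gives the bound of Theorem \ref{main} when $d_{max} > 6d$. The constants, the case threshold $108$, and the role of $n \ge 34992 = 3\cdot 108^2$ all match the paper's argument.
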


\begin{proof}
Suppose that $v \in V(H)$ and $d(v) = d_{max} > 6d$. Recall that $H$ has minimum degree at least $\frac{d}{3}$. Then by Claim \ref{vertices_more_than_edges},  $$\abs{N^H_2(v)} \ge \sum_{x \in N^H_1(v)} d(x) - 18 d(v) \ge \frac{d}{3} \abs{N^H_1(v)} -18d(v)=$$ $$= \frac{d}{3} (2d(v)) -18 d(v) = \left(\frac{2d}{3} -18\right)\cdot d(v) > \left(\frac{2d}{3} -18\right)\cdot  6d \ge 3d^2$$ if $d > 108$. That is, if $d > 108$, then $3d^2 \le \abs{N^H_2(v)} \le n$ which implies that $$\abs{E(H)} = \frac{nd}{3} \le \frac{1}{3 \sqrt3}n^{3/2},$$ as required. On the other hand, if $d \le 108$, then $$\abs{E(H)} = \frac{nd}{3} \le 36n \le \frac{1}{3 \sqrt3} n^{3/2}$$ for $n \ge 34992$, proving Theorem \ref{main}.
\end{proof}

In the next definition, for each hyperedge of $H$ we identify a subhypergraph of $H$ corresponding to this hyperedge. (We will later see that this subhypergraph has a negligible fraction of the hyperedges of $H$.)

\begin{definition}
	\label{garbage}
For $abc \in E(H)$, the subhypergraph $H'_{abc}$ of $H$ consists of the hyperedges $h = uvw \in E(H)$ such that $h \cap \{a,b,c\} = \emptyset$ and  $h$ satisfies at least one of the following properties.

\begin{enumerate}
\item $\exists x \in \{a,b,c\}$ such that $\abs{h \cap N^H_1(x)} \ge 2$.
		
\item $h \cap (N^H_1(a) \cap N^H_1(b) \cap N^H_1(c)) \not  = \emptyset$.
		
\item $\{x,y,z\} =  \{a,b,c\}$ and  $u \in N^H_1(x) \cap N^H_1(y)$ and $v \in N^H_1(z)$. 
\end{enumerate}
\end{definition}

\begin{definition}
	
Let $H_{abc}$ be the subhypergraph of $H$ defined by $V(H_{abc}) = V(H)$ and $E(H_{abc}) = E(H) \setminus E(H'_{abc})$. That is, $H_{abc}$ is the hypergraph obtained after deleting all the hyperedges of $H$ which are in $E(H'_{abc})$.
\end{definition}

The following claim shows that the number of hyperedges in $H'_{abc}$ is small. 

%(We remark that, in fact, if additionally we assume $H$ is $C_3$-free, then it is easy to see that $E(H'_{abc}) = \emptyset$.)

\begin{claim}
	\label{few_garbage_edges}
Let $abc \in E(H)$.  Then $$\abs{E(H'_{abc})}  \le 25d_{max}.$$
\end{claim}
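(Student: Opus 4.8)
The plan is to bound the number of hyperedges in $H'_{abc}$ by considering each of the three defining properties separately and summing the resulting estimates. Each hyperedge of $H'_{abc}$ satisfies at least one of the three conditions in Definition \ref{garbage}, so it suffices to show that the number of hyperedges satisfying property (1) is at most some constant times $d_{max}$, and similarly for properties (2) and (3); a final addition of these constants should yield the claimed bound $25 d_{max}$.

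First I would handle property (1). A hyperedge $h$ counted here satisfies $\abs{h \cap N^H_1(x)} \ge 2$ for some $x \in \{a,b,c\}$. Applying Claim \ref{N1} to each of the three vertices $a$, $b$, $c$, the number of such hyperedges is at most $6d(a) + 6d(b) + 6d(c) \le 18 d_{max}$. This is the cleanest step and leverages Claim \ref{N1} directly.

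Next I would treat properties (2) and (3), which are the delicate part. For property (2), I would bound $\abs{N^H_1(a) \cap N^H_1(b) \cap N^H_1(c)}$: a vertex in the triple intersection together with $a,b,c$ would tend to force short cycles or non-linear configurations in the $C_5$-free linear hypergraph, so this set should be very small (a constant), and each such vertex lies in at most a bounded number of hyperedges counted, giving a contribution of $O(1) \cdot d_{max}$ or simply a small constant. For property (3), I would fix an unordered identification $\{x,y,z\} = \{a,b,c\}$ and count hyperedges $uvw$ with $u \in N^H_1(x) \cap N^H_1(y)$ and $v \in N^H_1(z)$; the key observation is that the set $N^H_1(x) \cap N^H_1(y)$ of common neighbors of two fixed basic vertices is again of bounded size by linearity and $C_5$-freeness (two common neighbors would close up a short cycle with the edge $xy$), and summing over the (at most three) choices of which vertex plays the role of $z$ and over the degrees of these few common neighbors gives a contribution on the order of a constant times $d_{max}$. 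Adding the three contributions and checking the arithmetic gives the bound $25 d_{max}$.

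The main obstacle I expect is making the counts for properties (2) and (3) precise: one must carefully argue, using both linearity and the absence of a five-cycle, that the relevant neighborhood-intersection sets have bounded size, and then account for the fact that hyperedges in these sets could be double-counted across the different conditions. I would want to verify that the constants combine to exactly (or at most) $25$, which may require being slightly generous in the individual estimates rather than optimizing each one — consistent with the paper's earlier remark that the constants are not optimized. The crux is translating the structural constraints ($C_5$-free and linear) into the statement that two or three prescribed basic vertices have only a constant number of common neighbors, after which the degree sums collapse to multiples of $d_{max}$.
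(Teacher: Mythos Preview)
Your treatment of property (1) matches the paper exactly, and your outline for property (2) is on the right track: the paper shows $\abs{N^H_1(a)\cap N^H_1(b)\cap N^H_1(c)}\le 1$ (two such vertices would yield a $C_5$), contributing at most $d_{max}$.

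The gap is in your approach to property (3). You propose to bound $\abs{N^H_1(x)\cap N^H_1(y)}$ by an absolute constant, arguing that ``two common neighbors would close up a short cycle with the edge $xy$.'' But the short cycle you obtain this way is a $C_3$ or $C_4$, not a $C_5$, and $H$ is only assumed $C_5$-free. In fact $\abs{N^H_1(x)\cap N^H_1(y)}$ can be of order $d_{max}$: take $abc\in E(H)$ and, for $i=1,\dots,m$, add hyperedges $au_ip_i$, $bu_iq_i$, $cv_ir_i$, $u_iv_iw_i$ with all auxiliary vertices new. One can check this is linear and $C_5$-free, yet $u_1,\dots,u_m\in N^H_1(a)\cap N^H_1(b)$. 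Summing the degrees of these common neighbors, as you propose, then gives a bound of order $d_{max}^2$ in general, not $d_{max}$.

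The paper sidesteps this entirely by counting from the other side: for a fixed identification $\{x,y,z\}=\{a,b,c\}$ and a fixed $v\in N^H_1(z)$, at most \emph{one} hyperedge through $v$ can meet $N^H_1(x)\cap N^H_1(y)$, since two such hyperedges $u_1vw_1$, $u_2vw_2$ would produce the $C_5$ on the pairs $u_1x,\,xy,\,yu_2,\,u_2v,\,vu_1$. Summing over $v\in N^H_1(z)$ and then over the three choices of $z$ gives at most $\abs{N^H_1(a)}+\abs{N^H_1(b)}+\abs{N^H_1(c)}\le 6d_{max}$, and $18+1+6=25$. The key idea you are missing is this ``per-$v$'' uniqueness; once you have it, no bound on $\abs{N^H_1(x)\cap N^H_1(y)}$ is needed at all.
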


\begin{proof}
By Claim \ref{N1}, the number of hyperedges $h \in E(H)$ satisfying property 1 of Definition \ref{garbage} is at most $$6d(a) + 6d(b) + 6d(c) \le 18d_{max}.$$

Now we estimate the number of hyperedges satisfying property 2 of Definition \ref{garbage} . First let us show that $\abs{N^H_1(a) \cap N^H_1(b) \cap N^H_1(c)} \le 1$ which implies that the number of hyperedges satisfying property 2 of Definition \ref{garbage} is at most $d_{max}$. Assume for the sake of a contradiction that $\{u, v\} \subseteq N^H_1(a) \cap N^H_1(b) \cap N^H_1(c)$. Then by linearity of $H$, it is impossible that $uva, uvb, uvc \in E(H)$. Without loss of generality, assume that $uva \not \in E(H)$. Then it is easy to see that the pairs $ua, av, vc, cb, bu$ are contained in distinct hyperedges by linearity of $H$, creating a $C_5$ in $H$, a contradiction. 

Now we estimate the number of hyperedges satisfying property 3 of Definition \ref{garbage}. Fix $x, y , z$ such that $\{x, y ,z\} = \{a, b, c\}$. We will show that for each $v \in N^H_1(z)$, there is at most one hyperedge containing $v$ and a vertex from $N^H_1(x) \cap N^H_1(y)$.
%$u \in N^H_1(x) \cap N^H_1(y)$ such that $uvw \in E(H)$. 
Assume for the sake of a contradiction that there are two different hyperedges $u_1vw_1, u_2vw_2 \in E(H)$ such that $u_1, u_2 \in N^H_1(x) \cap N^H_1(y)$ and $v \in N^H_1(z)$. Now it is easy to see that the pairs $u_1x, xy, yu_2, u_2v, vu_1$ are contained in five distinct hyperedges since $H$ is linear and  $u_1vw_1, u_2vw_2$ are disjoint from $abc$, so there is a $C_5$ in $H$, a contradiction. So for each choice of $z \in \{a,b,c\}$ the number of hyperedges satisfying property 3 of Definition \ref{garbage} is at most $\abs{N^H_1(z)}$. So the total number of hyperedges satisfying property 3 of Definition \ref{garbage} is at most $$\abs{N^H_1(a)} + \abs{N^H_1(b)} +\abs{N^H_1(c)} \le 2(d(a) + d(b) +d(c)) \le 6d_{max}.$$ 

Adding up these estimates, we get the desired bound in our claim.
\end{proof}

%\begin{definition}
A \emph{$3$-link} in $H$ is a set of $3$ hyperedges $h_1, h_2, h_3 \in E(H)$ such that $h_1 \cap h_2  \not = \emptyset$, $h_2 \cap h_3 \not =\emptyset$ and $h_1 \cap h_3 = \emptyset$. The hyperedges $h_1$ and $h_3$ are called \emph{terminal} hyperedges of this $3$-link. (Notice that a given $3$-link defines four different Berge paths because each end vertex can be chosen in two ways. Also note that a $3$-link is simply the set of hyperedges of a linear path of length three.)  

Given a hypergraph $H$ and $abc \in E(H)$, let $p_{abc}(H)$ denote the number of $3$-links in $H$ in which $abc$ is a terminal hyperedge and let $p(H)$ denote the total number of $3$-links in $H$. Notice $$p(H) = \frac{1}{2} \sum_{abc \in E(H)} p_{abc}(H).$$
%\end{definition}

In Section \ref{upperonp}, we prove an upper bound on $p(H)$ and in Section \ref{loweronp}, we prove a lower bound on $p(H)$ and combine it with the upper bound to obtain the desired bound on $d$.

\subsection{Upper bounding $p(H)$}
\label{upperonp}
For any given $abc \in E(H)$, the following claim upper bounds the number of $3$-links in $H$ in which $abc$ is a terminal hyperedge by a little bit more than $2\abs{V(H)}$.

\begin{claim}
	\label{upperboundonpaths}
Let $abc \in E(H)$. Then, $$p_{abc}(H) \le 2\abs{V(H)} + 273d_{max}.$$
\end{claim}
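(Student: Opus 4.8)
The plan is to bound $p_{abc}(H)$ by counting the $3$-links with $abc$ as a terminal hyperedge, organized by where the middle and far hyperedges sit relative to the neighborhoods of $a,b,c$. A $3$-link with terminal hyperedge $abc$ consists of a middle hyperedge $h_2$ meeting $\{a,b,c\}$ and a far hyperedge $h_3$ meeting $h_2$ but disjoint from $abc$. So $h_2$ contains exactly one (or two) basic vertices of $abc$ together with vertices in $N^H_1(a)\cup N^H_1(b)\cup N^H_1(c)$, and $h_3$ is attached to $h_2$ through a vertex of the first neighborhood, landing its remaining vertices in the second neighborhood $N^H_2(\{a,b,c\})$. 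The key idea is that, after throwing away the ``garbage'' hyperedges of $H'_{abc}$, every valid far hyperedge $h_3$ is controlled by Claim \ref{vertices_more_than_edges} and the structural facts established in Claim \ref{few_garbage_edges}.

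First I would set up the count precisely. The middle hyperedge $h_2$ shares a vertex with $abc$; say $h_2 \ni a$ and write $h_2 = a x y$ with $x,y \in N^H_1(a)$. The far hyperedge $h_3$ must meet $h_2$ but be disjoint from $abc$, so $h_3$ attaches at $x$ or $y$ (not at $a$, since then it would not be disjoint from $\{a,b,c\}$ through a fresh vertex — here linearity is used). Thus each $3$-link corresponds to choosing a basic vertex of $abc$, a first-neighborhood vertex $w$ of it, and a hyperedge through $w$ other than the two already used. The rough count is therefore governed by $\sum_{w \in N^H_1(a)\cup N^H_1(b)\cup N^H_1(c)} d(w)$, and the heart of the argument is that this sum is essentially $\abs{N^H_2(\{a,b,c\})} \le \abs{V(H)}$ up to an $O(d_{max})$ correction, by (a suitable application of) Claim \ref{vertices_more_than_edges}. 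The reason $2\abs{V(H)}$ appears rather than $\abs{V(H)}$ is that a far hyperedge $uvw$ with two vertices in the second neighborhood can be reached in two ways, so it is counted twice; this is exactly the ``each far hyperedge contributes at most $2$ endpoints in the second neighborhood'' phenomenon.

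The role of Definition \ref{garbage} is to let me assume the far hyperedges are ``generic.'' Removing $E(H'_{abc})$ discards, via its three properties, precisely the degenerate configurations that would let a single second-neighborhood vertex be hit more than twice or let a far hyperedge double back into the combined neighborhood $N^H_1(a)\cap N^H_1(b)\cap N^H_1(c)$ or into an overlap of two first-neighborhoods. Concretely, property 1 rules out a hyperedge meeting some $N^H_1(x)$ in two points (these are bounded by Claim \ref{N1}), property 2 rules out hyperedges through the common neighborhood, and property 3 rules out the crossing configuration that would otherwise complete a $C_5$. Having excised these, I can argue that within $H_{abc}$ each vertex of the second neighborhood lies in a bounded number of far hyperedges reachable from $abc$, so the count collapses to roughly $2\abs{N^H_2(\{a,b,c\})} \le 2\abs{V(H)}$, and then add back the contribution of the deleted garbage hyperedges, which by Claim \ref{few_garbage_edges} is at most $25 d_{max}$ per terminal — contributing the $O(d_{max})$ slack that produces the $273 d_{max}$ error term after accounting for the several cases and the multiplicity of attachments.

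I expect the main obstacle to be the bookkeeping that converts ``$\sum_w d(w) \approx \abs{N^H_2(\{a,b,c\})}$'' into a clean bound on the number of distinct $3$-links, because a far hyperedge $h_3=uvw$ can be counted through each of its vertices that lies in a first neighborhood and through each middle hyperedge routing to it, and one must verify that after removing $H'_{abc}$ these overcounts are uniformly bounded (by $2$ for the second-neighborhood multiplicity, plus controlled corrections from vertices that lie in two of the three first neighborhoods). Getting the precise constant $273$ requires summing $6d_{max}$-type terms from Claim \ref{N1}, the $25 d_{max}$ from Claim \ref{few_garbage_edges}, and the $18 d(v)$-type correction from Claim \ref{vertices_more_than_edges} across the three choices of basic vertex and the two-way attachment, so I would track these additively rather than optimize them. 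The conceptually delicate point — and the place a $C_5$ would secretly appear if one were careless — is justifying that no far hyperedge reachable in two genuinely different ways from $abc$ survives in $H_{abc}$, which is exactly what properties 2 and 3 of Definition \ref{garbage} are engineered to guarantee.
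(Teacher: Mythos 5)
Your overall architecture matches the paper's: pass to $H_{abc}$, charge the deleted hyperedges of $H'_{abc}$ at a cost of $O(d_{max})$ (the paper multiplies the $25d_{max}$ of Claim \ref{few_garbage_edges} by $9$, since a garbage hyperedge lies in at most $9$ three-links with $abc$ by linearity), and then control the surviving $3$-links through first and second neighborhoods via Claims \ref{N1} and \ref{vertices_more_than_edges}. However, the step that actually produces the constant $2$ in $2\abs{V(H)}$ is misidentified, and one of your supporting assertions is false, so the plan as written does not close.

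It is not true that ``no far hyperedge reachable in two genuinely different ways from $abc$ survives in $H_{abc}$.'' A hyperedge $h=uvw$ disjoint from $abc$ with $u \in N^H_1(a)$, $v \in N^H_1(b)$, $w \in N^H_1(c)$, where $u,v,w$ are distinct and none lies in two of the three first neighborhoods, satisfies none of the three properties of Definition \ref{garbage}; it survives in $H_{abc}$ and yields three distinct $3$-links with terminal $abc$ (it closes Berge four-cycles through $abc$, which are not forbidden, and no $C_5$ arises because two consecutive basic edges cannot both lie in $h$). So the per-hyperedge multiplicity is not bounded by $2$, and a count based on the combined second neighborhood $N^H_2(\{a,b,c\})$ with a factor $2$ attributed to ``two second-neighborhood endpoints per far hyperedge'' has no valid source for the bound $2\abs{V(H)}$. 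The paper instead keeps the three sources separate: it defines $H_x$ and $E^x_2$ for each $x\in\{a,b,c\}$, exhibits a bijection $p_{abc}(H_{abc})=\sum_x \abs{E^x_2}$, applies Claim \ref{vertices_more_than_edges} inside each $H_x$ to get $\abs{E^x_2}\le \abs{N^{H_x}_2(x)}+16d^{H_x}(x)$, and then proves the genuinely new statement (Claim \ref{atmost2}) that every vertex of $V(H)$ lies in at most \emph{two} of the three sets $N^{H_a}_2(a),N^{H_b}_2(b),N^{H_c}_2(c)$; that is where $2\abs{V(H)}$ comes from. Moreover, the main case of that claim (a vertex $v$ reached from $a$ and from $b$ by two \emph{different} far hyperedges) is killed by a direct Berge $C_5$ through $abc$, not by the garbage removal; properties 2 and 3 of Definition \ref{garbage} are needed only for the degenerate case in which the far hyperedges through $v$ from all three of $a,b,c$ coincide. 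You have the roles of these two mechanisms reversed. To repair your argument you would need to formulate and prove the ``each vertex lies in at most two of the three second neighborhoods'' statement; without it the factor $2$ is unjustified.
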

\begin{proof}[Proof of Claim \ref{upperboundonpaths}]
	
First we show that most of the $3$-links of $H$ are in $H_{abc}$.
\begin{claim}
	\label{Habcdominates}We have,
$$p_{abc}(H) \le p_{abc}(H_{abc}) + 225 d_{max}.$$
\end{claim}
\begin{proof}
 Consider $h \in E(H) \setminus E(H_{abc}) = E(H'_{abc})$. Note that $h \cap \{a,b,c\} = \emptyset$. The number of $3$-links containing both $abc$ and $h$ is at most $9$ since the number of hyperedges in $H$ that intersect both $h$ and $abc$ is at most $9$ as $H$ is linear. Therefore the total number of $3$-links in $H$ containing $abc$ and a hyperedge of $E(H) \setminus E(H_{abc})$ is at most $9\abs{E(H'_{abc})} \le 9(25 d_{max}) = 225 d_{max}$ by Claim \ref{few_garbage_edges} which implies that $p_{abc}(H) \le p_{abc}(H_{abc}) + 225 d_{max}$, as required.
\end{proof}

For $x \in \{a,b,c\}$, let $H_x$ be a subhypergraph of $H_{abc}$ whose edge set is $E(H_x) = E^x_1 \bigcup E^x_2$ where $E^x_1 = \{h \in E(H_{abc}) \mid x \in h \text{ and } h \not = abc\}$ and $E^x_2 = \{h \in E(H_{abc}) \mid \exists h' \in E^x_1,  x \not \in h \text{ and } h \cap h' \not = \emptyset  \}$ and its vertex set is $V(H_x) = \{v \in V(H_{abc})   \mid \exists h \in E(H_x) \text{ and } v \in h \}$. Note that $\abs{E^x_1} = d^{H_x}(x) = d^H(x)-1$ and every hyperedge in $E^x_1$ contains exactly two vertices of $N^{H_x}_1(x)$ and every hyperedge in $E^x_2$ contains one vertex of $N^{H_x}_1(x)$ and two vertices of $N^{H_x}_2(x)$ because hyperedges containing more than one vertex of $N^{H_x}_1(x)$ do not belong to $H_{abc}$ (since they are in $H'_{abc}$ by property 1 of Definition \ref{garbage}) and thus, do not belong to $H_x$.

We will show that the number of ordered pairs $(x, h)$ such that $x \in \{a,b,c\}$ and $h \in E^x_2$ is equal to $p_{abc}(H_{abc})$ by showing a bijection between the set of ordered pairs $(x, h)$ such that $x \in \{a,b,c\}$ and $h \in E^x_2$ and the set of $3$-links in $H_{abc}$ where $abc$ is a terminal hyperedge. To each $3$-link $abc, h', h$ in $H_{abc}$  where $abc \cap h = \emptyset$ and $h' \cap abc = \{x\}$, let us associate the ordered pair $(x, h)$. Clearly $x \in \{a,b,c\}$ and $h \in E^x_2$. Now consider an ordered pair $(x, h)$ where $x \in \{a,b,c\}$ and $h \in E^x_2$. Then $h$ contains exactly one vertex $u \in N^{H_x}_1(x)$, so there is a unique hyperedge $h' \in E(H)$ containing the pair $ux$. Therefore, there is a unique $3$-link  in $H_{abc}$ associated to $(x, h)$, namely $abc, h', h$, establishing the required bijection. So, 
\begin{equation}
\label{p_abc}
p_{abc}(H_{abc}) = \abs{\{(x, h) \mid x \in \{a,b,c\}, h \in E^x_2 \}} = \sum_{x \in \{a,b,c\}} \abs{E^x_2}.
\end{equation}

Now our aim is to upper bound $p_{abc}(H_{abc})$ in terms of $\sum_{x \in \{a,b,c\}} \abs{N^{H_x}_2(x)}$, which will be upper bounded in Claim \ref{atmost2}.

Substituting $v = x$ and $H = H_x$ in Claim \ref{vertices_more_than_edges}, we get, $\abs{N^{H_x}_2(x)} \ge \sum_{y \in N^{H_x}_1(x)} d^{H_x}(y) - 18 d^{H_x}(x)$ for each $x \in \{a,b,c\}$.  Now since $\sum_{y \in N^{H_x}_1(x)} d(y) = 2 \abs{E^x_1} + \abs{E^x_2}$, we have $\abs{N^{H_x}_2(x)} \ge 2 \abs{E^x_1} + \abs{E^x_2} - 18 d^{H_x}(x)$. So by \eqref{p_abc},
\begin{equation*}
\sum_{x \in \{a,b,c\}} \abs{N^{H_x}_2(x)} \ge \sum_{x \in \{a,b,c\}} (2 \abs{E^x_1} + \abs{E^x_2} - 18 d^{H_x}(x)) = \sum_{x \in \{a,b,c\}} (2 \abs{E^x_1} - 18 d^{H_x}(x)) + p_{abc}(H_{abc}).
\end{equation*}

Since $\abs{E^x_1} = d^{H_x}(x) = d^H(x)-1$, we have $2 \abs{E^x_1} - 18 d^{H_x}(x) = -16 (d^H(x)-1) $. So,

\begin{equation}
\label{second_level}
\sum_{x \in \{a,b,c\}} \abs{N^{H_x}_2(x)} \ge -16 \sum_{x \in \{a,b,c\}} (d^H(x)-1) + p_{abc}(H_{abc}) \ge -48(d_{max}-1) + p_{abc}(H_{abc}).
\end{equation}

Now we want to upper bound $\sum_{x \in \{a,b,c\}} \abs{N^{H_x}_2(x)}$ by $2 \abs{V(H)}$. 

\begin{claim}
	\label{atmost2}
Each vertex $v \in V(H)$ belongs to at most two of the sets $N^{H_a}_2(a), N^{H_b}_2(b), N^{H_c}_2(c)$. So $$\sum_{x \in \{a,b,c\}} \abs{N^{H_x}_2(x)} \le 2 \abs{V(H)}.$$
\end{claim} 
\begin{proof}
Suppose for the sake of a contradiction that there exists a vertex $v \in V(H)$ which is in all three sets $N^{H_a}_2(a), N^{H_b}_2(b), N^{H_c}_2(c)$. Then for each $x \in \{a,b,c\}$, there exists $h_x \in E^x_2$ such that $v \in h_x$. 

First let us assume $h_a = h_b = h_c = h$ and let $h_x \cap N^{H_x}_1(x) = \{v_x\}$ for each $x \in \{a,b,c\}$. If $v_a = v_b = v_c$ then $h \cap (N^H_1(a) \cap N^H_1(b) \cap N^H_1(c)) \not  = \emptyset$, so by property 2 of Definition \ref{garbage}, $h \in E(H'_{abc})$ so $h \not \in E(H_{abc}) \supseteq E^x_2$, a contradiction.  If $v_x = v_y \not = v_z$ for some $\{x,y,z\} = \{a,b,c\}$ then by property 3 of Definition \ref{garbage}, $h \not \in E(H_{abc}) \supseteq E^x_2$, a contradiction again. Therefore, $v_a, v_b, v_c$ are distinct.
Moreover, for each $x \in \{a,b,c\}$, $v_x \in N^{H_x}_1(x)$ and $v \in N^{H_x}_2(x)$. However, since $N^{H_x}_1(x)$ and $N^{H_x}_2(x)$ are disjoint for each $x \in \{a,b,c\}$ by definition (see the Notation section for the precise definition of first and second neighborhoods), $v$ is different from $v_a, v_b$ and $v_c$. So $v, v_a, v_b, v_c \in h$, a contradiction since $h$ is a hyperedge of size $3$. 

So there exist $x, y \in \{a,b,c\}$ such that $h_x \not = h_y$. Also, there exist $h'_x \in E^x_1, h'_y \in E^y_1$ such that $h_x \cap h'_x \not = \emptyset$ and $h_y \cap h'_y \not = \emptyset$. Now it is easy to see that the hyperedges $h_x, h_y, h'_x, h'_y, abc$ form a $C_5$, a contradiction, proving the claim.
\end{proof}

So by Claim \ref{atmost2}, $\sum_{x \in \{a,b,c\}} \abs{N^{H_x}_2(x)}\le 2 \abs{V(H)}$. Combining this with \eqref{second_level}, we get 

\begin{equation}
p_{abc}(H_{abc}) -48(d_{max}-1) \le \sum_{x \in \{a,b,c\}} \abs{N^{H_x}_2(x)} \le  2 \abs{V(H)}.
\end{equation}

Therefore, by Claim \ref{Habcdominates} and the above inequality, we have 
$$p_{abc}(H) \le p_{abc}(H_{abc}) + 225 d_{max} \le 2 \abs{V(H)} + 48(d_{max}-1) + 225 d_{max} \le 2\abs{V(H)} + 273d_{max},$$ completing the proof of Claim \ref{upperboundonpaths}.
\end{proof}

\vspace{3mm}

So by Claim \ref{upperboundonpaths}, we have 

\begin{equation}
p(H) = \frac{1}{2} \sum_{abc \in E(H)} p_{abc}(H) \le \frac{1}{2}  (2\abs{V(H)} + 273d_{max})\abs{E(H)}.
\end{equation}

By Claim \ref{dmax}, we can assume $d_{max} \le 6d$. Using this in the above inequality we obtain, 

\begin{equation}
\label{upper_bound}
p(H)  \le \frac{1}{2}  (2\abs{V(H)} + 1638d)\abs{E(H)} = (n+819d)\frac{nd}{3}.
\end{equation}

\subsection{Lower bounding $p(H)$}
\label{loweronp}

We introduce some definitions that are needed in the rest of our proof where we establish a lower bound on $p(H)$ and combine it with the upper bound in \eqref{upper_bound}.

\vspace{2mm}

A \emph{walk} of length $k$ in a graph is a sequence $v_0e_0v_1e_1 \ldots v_{k-1}e_{k-1}v_k$ of vertices and edges such that $e_i = v_iv_{i+1}$ for $0 \le i < k$. For convenience we simply denote such a walk by $v_0v_1 \ldots v_{k-1} v_k$. A walk is called unordered if $v_0v_1 \ldots v_{k-1} v_k$ and $v_kv_{k-1} \ldots v_1 v_0$ are considered as the same walk. From now on, unless otherwise stated, we only consider unordered walks. A \emph{path} is a walk with no repeated vertices or edges. Blakley and Roy \cite{Blakley_Roy} proved a matrix version of H\"older's inequality, which implies that any graph $G$ with average degree $d^G$ has at least as many walks of a given length as a $d^G$-regular graph on the same number of vertices.

\vspace{2mm}

We will now prove a lower bound on $p(H)$. Consider the shadow graph $\partial H$ of $H$. The number of edges in $\partial H$ is equal to $3 \abs{E(H)} = 3 \cdot \frac{nd}{3} = nd$. Then the average degree of a vertex in $\partial H$ is $d^{\partial H} = 2d$, and the maximum degree $\Delta^{\partial H}$ in $\partial H$ is at most $2 d_{max} \le 12d$ by Claim \ref{dmax}. Applying the Blakley-Roy inequality \cite{Blakley_Roy} to the graph $\partial H$, we obtain that there are at least $\frac{1}{2}n(d^{\partial H})^3$ (unordered) walks of length $3$ in $\partial H$. Then there are at least $$\frac{1}{2}n(d^{\partial H})^3-3n(\Delta^{\partial H})^2$$ paths of length $3$ in $\partial H$ as there are at most $3n(\Delta^{\partial H})^2$ walks that are not paths.
Indeed, if $v_1v_2v_3v_4$ is a walk that is not a path, then there exists a repeated vertex $v$ in the walk such that either $v_1 = v_3 =v$ or $v_2 = v_4=v$ or $v_1 = v_4=v$. Since $v$ can be chosen in $n$ ways and the other two vertices of the walk are adjacent to $v$, we can choose them in at most $(\Delta^{\partial H})^{2}$ different ways.

%there are at most $(\Delta^{\partial H})^{j-2}$ possibilites for the part of the walk from $v_1$ to $v_{j-1}$ and $v_{j-1}v_j$ is given and at most $(\Delta^{\partial H})^{4-j}$ possibilites for the part of the walk from $v_j$ to $v_{4}$, so a total of $(\Delta^{\partial H})^{2}$ possibilities. 

A path in $\partial H$ is called a rainbow path if the edges of the path are contained in distinct hyperedges of $H$. If a path $abcd$ is not rainbow then there are two (consecutive) edges in it that are contained in the same hyperedge of $H$. So there are two hyperedges $h, h' \in E(H)$, $h \cap h' \not = \emptyset$ such the path $abcd$ is contained in the $2$-shadow of $h, h'$. 
Now we estimate the number of non-rainbow paths.

We can choose these pairs $h, h' \in E(H)$ in $\sum_{v \in V(H)} \binom{d^H(v)}{2}$ ways and for a fixed pair $h,  h' \in E(H)$, it is easy to see that the path $abcd$  can chosen in $8$ different ways in the $2$-shadow of $h, h'$. 
%
%
%
%there are two choices for the first two edges of the path from $h$ and two choices for the last edge of the path from $h'$, so a total of $4$ choices if the first two edges of the path are in $h$ and the last edge is in $h'$. 
Therefore, the number of non-rainbow paths in $\partial H$ is at most $$\sum_{v \in V(H)}  8\binom{d^H(v)}{2} \le 4n(d_{max})^2 \le 4n(6d)^2 = 144nd^2.$$ So the number of rainbow paths in $\partial H$ is at least $$\frac{1}{2}n(d^{\partial H})^3-3n(\Delta^{\partial H})^2 - 144nd^2 = \frac{1}{2}n(2d{\tiny })^3-3n(12d)^2-144nd^2 = 4nd^3 - 576nd^2.$$

Since each $3$-link in $H$ produces $4$ rainbow paths in $\partial H$, the number of rainbow paths in $\partial H$ is $4p(H)$. So,
$4p(H) \ge 4nd^3 - 576nd^2$. That is, $$p(H) \ge nd^3 - 144nd^2.$$ Combining this with \eqref{upper_bound}, we get 
$$
nd^3 - 144nd^2 \le p(H) \le (n+819d)\frac{nd}{3}.
$$
Simplifying, we get $d^2 - 144d \le (n+819d)/3$. That is, $$d \le \sqrt{\frac{n}{3}+\frac{173889}{4}} + \frac{417}{2}.$$ So, $$\abs{E(H)} = \frac{nd}{3} \le \frac{n}{3} \cdot \left (\sqrt{\frac{n}{3}+\frac{173889}{4}} + \frac{417}{2} \right) = \frac{1}{3 \sqrt3}n^{3/2} + O(n),$$ completing the proof of Theorem \ref{main}.

\section{$C_4$-free linear hypergraphs: Proof of Theorem \ref{c4}}
\label{c4section}

Let $H$ be a $3$-uniform linear hypergraph on $n$ vertices containing no (Berge) $C_4$. Let $d$ denote the average degree of a vertex in $H$.

%We may assume that $H$ has minimum degree at least $\frac{d}{3}$ [CITE]. 
%Let $d$ and $d_{max}$ denote the average degree and maximum degree of a vertex in $H$, respectively. 

%Let us fix a vertex $v \in V(H)$.
\vspace{2mm}
\textbf{Outline of the proof:} Our plan is to first upper bound $\sum_{x \in N^H_1(v)} 2d(x)$ for each fixed $v \in V(H)$, which as the following claim shows, is not much more than $n$. Then we estimate $\sum_{v \in V(H)}\sum_{x \in N^H_1(v)} 2d(x)$ in two different ways to get the desired bound on $d$.

\begin{claim}
	\label{upperboundonsumofdegrees}
For every $v \in V(H)$, we have $$\sum_{x \in N^H_1(v)} 2d(x) \le n + 12d(v).$$
\end{claim}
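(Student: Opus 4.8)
The plan is to pass to the shadow graph $\partial H$ and exploit that in a linear hypergraph each shadow edge lies in a unique hyperedge. Since every hyperedge through a vertex $x$ contributes two distinct neighbours, $2d(x)=\deg_{\partial H}(x)$, so $\sum_{x\in N^H_1(v)}2d(x)=\sum_{x\in N^H_1(v)}\deg_{\partial H}(x)$. Every neighbour of a vertex of $N^H_1(v)$ lies in $\{v\}\cup N^H_1(v)\cup N^H_2(v)$ by the definition of the second neighbourhood, so classifying the edges of $\partial H$ incident to $N^H_1(v)$ by the side they land on gives
$$\sum_{x\in N^H_1(v)}2d(x)=2d(v)+2e_1+e_2,$$
where $e_1$ is the number of edges of $\partial H$ inside $N^H_1(v)$ and $e_2$ is the number of edges of $\partial H$ between $N^H_1(v)$ and $N^H_2(v)$ (the term $2d(v)$ accounts for the edges back to $v$, counted once each). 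Since $\{v\}$, $N^H_1(v)$ and $N^H_2(v)$ are disjoint, $\abs{N^H_2(v)}\le n-1-2d(v)$, so it suffices to bound $e_1$ and the excess of $e_2$ over $\abs{N^H_2(v)}$ by $O(d(v))$.

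The engine for both bounds is the following consequence of the $C_4$-free, linear hypothesis. In a linear hypergraph a shadow $4$-cycle fails to be a Berge $C_4$ only if two of its edges lie in a common hyperedge; opposite edges cannot (a hyperedge has three vertices), so some three consecutive vertices of the $4$-cycle form a hyperedge. I will call two vertices \emph{partners} if together with $v$ they form a hyperedge. First I would bound $e_2=\sum_{y\in N^H_2(v)}c(y)$, where $c(y)=\abs{N^H_1(v)\cap N^H_1(y)}$. For two neighbours $x,x'\in N^H_1(v)$ of a vertex $y\in N^H_2(v)$, the cycle $v\,x\,y\,x'\,v$ in $\partial H$ forces one of its consecutive triples to be a hyperedge; the triples $\{v,x,y\}$ and $\{v,x',y\}$ are excluded (they would put $y$ in $N^H_1(v)$), so $x,x'$ are partners or $\{y,x,x'\}$ is a hyperedge. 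Both relations are matchings on the neighbours of $y$ in $N^H_1(v)$ (by linearity each vertex has a unique partner, and at most one hyperedge through $y$ yields a given pair), and two matchings cannot cover $K_t$ for $t\ge 3$, so $c(y)\le 2$. Thus the excess $\sum_{y}(c(y)-1)^+$ equals the number of $y\in N^H_2(v)$ with $c(y)=2$; each is charged either to one of the $d(v)$ partner pairs (which, by the same matching argument applied to their common non-$v$ neighbours, have at most two such neighbours) or to a hyperedge meeting $N^H_1(v)$ in two vertices (of which there are $O(d(v))$ many), giving excess $=O(d(v))$.

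Next I would bound $e_1$ by showing $\partial H[N^H_1(v)]$ has bounded maximum degree. For $x\in N^H_1(v)$ and two of its $N^H_1(v)$-neighbours $y_1,y_2$ other than its partner, the cycle $v\,y_1\,x\,y_2\,v$ again forces $\{y_1,x,y_2\}$ to be a hyperedge or $y_1,y_2$ to be partners, and these are matchings, so $x$ has at most two non-partner neighbours inside $N^H_1(v)$; hence $\deg_{\partial H[N^H_1(v)]}(x)\le 3$ and $e_1\le 3d(v)$. Combining, $\sum_{x\in N^H_1(v)}2d(x)\le 2d(v)+2e_1+\abs{N^H_2(v)}+O(d(v))\le n+O(d(v))$, and tracking the constants (here $2e_1\le 6d(v)$ and the $N^H_1$–$N^H_2$ excess $\le 4d(v)$) yields the bound $n+12d(v)$.

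The main obstacle is the within-$N^H_1(v)$ term $e_1$: because it enters with a factor of two, it is not enough to argue that each vertex of $N^H_2(v)$ is reached few times; one must genuinely prove that $\partial H[N^H_1(v)]$ is sparse. Both this sparsity and the codegree bound rest on the same point, namely that in a linear Berge-$C_4$-free hypergraph the "partner" and "common hyperedge" relations are each matchings, so any $4$-cycle through $v$ must close up via a hyperedge and all relevant multiplicities are pinned to at most two. Keeping the constant at or below $12d(v)$ requires using the sharp form of the matching bound (two matchings cannot cover $K_3$), rather than the weaker path/Erdős–Gallai estimate used for the $C_5$ analysis.
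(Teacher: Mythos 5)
Your argument is correct and is essentially the paper's own proof in dual bookkeeping: your codegree bound $c(y)\le 2$ together with the charging of doubly-hit vertices of $N^H_2(v)$ to hyperedges through $v$ is exactly the paper's near-disjointness statement for the sets $S_x$ (Claim \ref{S_xsmeetless}) read from the $N^H_2(v)$ side, and your maximum-degree-$3$ bound on the shadow graph induced on $N^H_1(v)$ plays the role of Claim \ref{small_degreeG1}, both resting on the same Berge-$C_4$ constructions. The one assertion you leave unjustified --- that only $O(d(v))$ hyperedges meet $N^H_1(v)$ in two vertices --- follows at once from your own estimate $e_1\le 3d(v)$, since by linearity distinct such hyperedges give distinct edges of the shadow graph inside $N^H_1(v)$; with that inserted your constants indeed come out below $n+12d(v)$.
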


\begin{proof}
First we show that most of the hyperedges incident to $x \in N^H_1(v)$ contain only one vertex from $N^H_1(v)$.

\begin{claim}
	\label{small_degreeG1}
For any given $x \in N^H_1(v)$,  the number of hyperedges $h \in E(H)$ containing $x$ such that $\abs{h \cap N^H_1(v)} \ge 2$ is at most $3$. 
\end{claim}

\begin{proof}
%We construct an auxiliary graph $G_1$ whose vertex set is $N^H_1(v)$ in the following way: From each hyperedge $h \in E(H)$ containing $u$ with $\abs{h \cap N^H_1(v)} \ge 2$ and $v \not \in h$, we select exactly one pair $xy \subset h \cap N^H_1(v)$ arbitrarily.  
%It suffices to show that every vertex $u \in V(G_1)$ has degree at most $2$ in $G_1$ because there is only one hyperedge in $H$ containing $u$ and $v$. 

Suppose for a contradiction that there is a vertex $x \in N^H_1(v)$ which is contained in $4$ hyperedges $h$ such that $\abs{h \cap N^H_1(v)} \ge 2$. One of them is the hyperedge containing $x$ and $v$. Let $h_1, h_2, h_3$ be the other $3$ hyperedges. Then it is easy to see that two of these hyperedges intersect two different hyperedges incident to $v$, and these four hyperedges form a $C_4$ in $H$, a contradiction.  
%For each $1 \le i \le 3$,  let us select a pair $uy_i \subset h_i \cap N^H_1(v)$ arbitrarily. Then it is easy to see that there exist two different hyperedges $h'_i, h'_j$ such that $vy_i \subset h'_i$ and $vy_j \subset h'_j$ for some $i, j \in \{1,2,3\}$, $i \not = j$. Then the hyperedges $h_i$, $h_j$, $h'_i$ and $h'_j$ form a $C_4$ in $H$, a contradiction. 
\end{proof}

For each $x \in N^H_1(v)$, let $E_x = \{h \in E(H)  \mid h \cap N^H_1(v) = \{x\} \}$. Note that any hyperedge of $E_x$ does not contain $v$, so it contains exactly two vertices from $N^H_2(v)$. Let $S_x = \{w \in N^H_2(v) \mid \exists h \in E_x \text{ with } w \in h \}$. Then  $\abs{S_x} = 2 \abs{E_x}$ since $H$ is linear. Notice that $\abs{E_x} \ge d(x)-3$ by Claim \ref{small_degreeG1}, so

\begin{equation}
\label{boundonS_u}
\abs{S_x} \ge 2d(x)-6.
\end{equation}

The following claim shows that the sets $\{S_x \mid x \in N^H_1(v) \}$ do not overlap too much.

\begin{claim}
	\label{S_xsmeetless}
Let $x, y \in N^H_1(v)$ be distinct vertices. If $xyv \not \in E(H)$ then $S_x \cap S_y = \emptyset$ and if $xyv \in E(H)$ then $\abs{S_x \cap S_y} \le 2$.
\end{claim}

\begin{proof}
Take $x, y \in N^H_1(v)$ with $x \not = y$. Let $h_x, h_y \in E(H)$ be hyperedges incident to $v$ such that $x \in h_x$ and $y \in h_y$. 
First suppose $h_x \not = h_y$. Then it is easy to see that $S_x \cap S_y = \emptyset$ because otherwise $h_x, h_y$ and the two hyperedges containing $xw, yw$ for some $w \in S_x \cap S_y$ form a $C_4$, a contradiction.

Now suppose $h_x = h_y$. We claim that $\abs{S_x \cap S_y} \le 2$. Suppose for the sake of a contradiction that there are $3$ distinct vertices $v_1, v_2, v_3 \in S_x \cap S_y$. Then it is easy to see that there exist $i, j \in \{1,2,3\}$ such that neither $v_iv_jx$ nor $v_iv_jy$ is a hyperedge in $H$. So there are two different hyperedges $h_1, h_2 \in E_x$ such that $xv_i \in h_1$ and $xv_j \in h_2$. Similarly there are two different hyperedges $h'_1, h'_2 \in E_y$ such that $yv_i \in h'_1$ and $yv_j \in h'_2$. As $E_x \cap E_y = \emptyset$, the hyperedges $h_1, h_2, h'_1, h'_2$ are distinct and form a $C_4$, a contradiction.
\end{proof}

We will upper bound $\sum_{x \in N^H_1(v)} \abs{S_x}$. It follows from Claim \ref{S_xsmeetless} that each vertex $w \in N^H_2(v)$ belongs to at most two of the sets in $\{S_x \mid x \in N^H_1(v) \}$. Moreover, $w$ belongs to two sets $S_p, S_q \in \{S_x \mid x \in N^H_1(v) \}$ only if there exists a unique pair $p, q \in N^H_1(v)$ such that $pqv \in E(H)$ and for any such pair $p, q$ with $pqv \in E(H)$, there are at most $2$ vertices $w$ with $w \in S_p, S_q$. So there are at most $2d(v)$ vertices in $N^H_2(v)$ that are counted twice in the summation $\sum_{x \in N^H_1(v)} \abs{S_x}$. That is,

\begin{equation}
\label{sx}
\abs{N^H_2(v)} \ge \sum_{x \in N^H_1(v)} \abs{S_x} - 2d(v).
\end{equation}

As $N^H_2(v)$ and $N^H_1(v)$ are disjoint, we have $n \ge \abs{N^H_2(v)} + \abs{N^H_1(v)}$.  So by \eqref{sx}, 

\begin{equation}
n  \ge  \sum_{x \in N^H_1(v)} \abs{S_x} - 2d(v) +  \abs{N^H_1(v)} =    \sum_{x \in N^H_1(v)} \abs{S_x} - 2d(v)  + 2d(v) = \sum_{x \in N^H_1(v)} \abs{S_x}.
\end{equation}

Combining this with \eqref{boundonS_u}, we get

\begin{equation}
\label{lower_n}
n \ge \sum_{x \in N^H_1(v)} (2d(x)-6) = \sum_{x \in N^H_1(v)} 2d(x) - 6 \abs{N^H_1(v)} = \sum_{x \in N^H_1(v)} 2d(x) - 12d(v),
\end{equation}
completing the proof of Claim \ref{upperboundonsumofdegrees}. 
\end{proof}

%
%\begin{lemma}
%	\label{max_degree_c4}
%We may assume $d_{max} \le 4d$ for $n$ sufficiently large (more precisely, $n \ge 5184$).
%\end{lemma}
%\begin{proof}

%First suppose $d \ge 36$. Then $\frac{2d}{3}-6 \ge \frac{d}{2}$. If $d(v) \ge 4d$, then $2d(v) \left(\frac{2d}{3}-6\right) \ge 8d \cdot \frac{d}{2} = 4d^2$. So by \eqref{lower_n}, we have $n \ge 4d^2$. Therefore, $\abs{E(H)} = \frac{nd}{3} \le \frac{n^{3/2}}{6}$ proving Theorem \ref{c4}.  
%If $d \le 36$, then again $\abs{E(H)}= nd/3 \le 12n \le n^{3/2}/6$ when $n \ge 5184$. 
%\end{proof}

We now estimate $\sum_{v \in V(H)} \sum_{x \in N^H_1(v)} 2d(x)$ in two different ways. On the one hand, by Claim \ref{upperboundonsumofdegrees}

\begin{equation}
\label{up}
\sum_{v \in V(H)} \sum_{x \in N^H_1(v)} 2d(x) \le \sum_{v \in V(H)} (n + 12d(v)) = n^2 + 12nd.
\end{equation}

On the other hand, 
\begin{equation}
\label{down}
\sum_{v \in V(H)} \sum_{x \in N^H_1(v)} 2d(x) = \sum_{v \in V(H)} 2d(v) \cdot 2d(v) = \sum_{v \in V(H)} 4d(v)^2 \ge 4nd^2.
\end{equation}

The last inequality follows from the Cauchy-Schwarz inequality. Finally, combining \eqref{up} and \eqref{down}, we get $4nd^2 \le n^2 + 12nd$. Dividing by $n$, we have $4d^2 \le n + 12d$, so $d \le \frac{1}{2} (\sqrt{n+9}+3)$. Therefore, $$\abs{E(H)} = \frac{nd}{3} \le \frac{1}{6} n\sqrt{n+9} + \frac{n}{2},$$ proving Theorem \ref{c4}.

\section{Proof of Theorem \ref{construction}: Construction}
\label{theorem4section}
We prove Theorem \ref{construction} by constructing a linear hypergraph $H$ below, and then we show that it is $\C^{\lin}_{2k+1}$-free. Finally, we count the number of hyperedges in it.

\vspace{3mm}

\textbf{Construction of $H$:} Let $G =(V(G), E(G))$ be a $\C_{2k-2}$-free bipartite graph (i.e., girth at least $2k$)  on $z$ vertices. Let the two color classes of $G$ be $L = \{l_1, l_2, \ldots l_{z_1}\}$ and $R = \{r_1, r_2, \ldots, r_{z_2}\}$ where $z = z_1+z_2$.
%with maximum possible number of edges.

Now we construct a hypergraph $H = (V(H), E(H))$ based on $G$. Let $q$ be an integer. For each $ 1 \le t \le q$, let $L_t = \{l^t_1, l^t_2, \ldots, l^t_{z_1}\}$ and $R_t = \{r^t_1, r^t_2, \ldots, r^t_{z_2}\}$. Let $B = \{v_{i, j} \mid 1 \le i \le z_1, 1 \le j \le z_2\text{ and } l_ir_j \in E(G)\}$. (Note that $\abs{B} = \abs{E(G)}$ as we only create a vertex in $B$ if the corresponding edge exists in $G$.) Now let $V(H) = \bigcup\limits_{i=1}^{q} L_{i} \cup \bigcup\limits_{i=1}^{q} R_{i} \cup B$ and $E(H) = \{v_{i,j} l^t_i r^t_j \mid v_{i,j} \in B \text{ and } l_ir_j \in E(G) \text{ and } 1 \le t \le q\}$. Clearly $H$ is a linear hypergraph. 

\vspace{3mm}

\textbf{Proof that $H$ is $\C^{\lin}_{2k+1}$-free:}
Suppose for the sake of a contradiction that $H$ contains $C^{\lin}_{2k'+1}$, a linear cycle of length $2k'+1$ for some $k' \le k$. 

Since the basic cycle of $C^{\lin}_{2k'+1}$ is of odd length it must contain at least one vertex in $B$. (Note that here we used that the length of the linear cycle is odd.)

First let us assume that the basic cycle of $C^{\lin}_{2k'+1}$ contains exactly one vertex $x \in B$. Then $\bigcup\limits_{i=1}^{q} L_{i} \cup \bigcup\limits_{i=1}^{q} R_{i} \cup x$ contains all the basic vertices of $C^{\lin}_{2k'+1}$. For $X \subseteq V(H)$, let $H[X]$ denote the subhypergraph in $H$ induced by $X$. Notice that $x$ is a cut vertex in the $2$-shadow of $H[\bigcup\limits_{i=1}^{q} L_{i} \cup \bigcup\limits_{i=1}^{q} R_{i} \cup x]$. Therefore, there exists a $t$ such that the basic vertices of $C^{\lin}_{2k'+1}$ belong to $L_t \cup R_t \cup x$. Let $xu$ and $xv$ be the two edges incident to $x$ in the basic cycle of $C^{\lin}_{2k'+1}$. However, by construction the hyperedge containing $xu$ is the same as the hyperedge containing $xv$, which is impossible since $C^{\lin}_{2k'+1}$ is a linear cycle. Therefore, there are at least two basic vertices of $C^{\lin}_{2k'+1}$ in $B$. 

Let $c_1, c_2, \ldots, c_s$ be the basic vertices of $C^{\lin}_{2k'+1}$ in $B$ and let us suppose that they are ordered such that the subpaths $P_{i,i+1}$ of the basic cycle of $C^{\lin}_{2k'+1}$ from $c_i$ to $c_{i+1}$, are pairwise edge-disjoint for $1 \le i \le s$ (addition in the subscript is taken modulo $s$ from now on).  Note that $s \ge 2$ by the previous paragraph and $s \le k'$ because for each $i$, the subpath $P_{i,i+1}$ contains at least two edges. It is easy to see that for each $1 \le i \le s$, there exists a $t$ such that $V(P_{i,i+1}) \subseteq L_t \cup R_t \cup \{c_i, c_{i+1}\}$. 
Let $P'_{i,i+1}$ be a path in $G$ with the edge set $\{l_\alpha r_\beta \mid l^t_\alpha r^t_\beta \in E(P_{i,i+1}) \text{ for some } t\}$ for $1 \le i \le s$.  Clearly,  $\abs{E(P'_{i,i+1})} = \abs{E(P_{i,i+1})}-2 \ge 0$.
For each $c_i$, there exists $1 \le \alpha_i \le z_1$, $1 \le \beta_i \le z_2$ such that $c_i = v_{\alpha_i, \beta_i}$. Let $e_i = l_{\alpha_i} r_{\beta_i}$ for each $1 \le i \le s$, and let $e_i^t = l_{\alpha_i}^t r_{\beta_i}^t$ for each $ 1 \le t \le q$. 
Notice that $P'_{i,i+1}$ is a path in $G$  and $e_i \in E(G)$. Moreover, $P'_{i,i+1}$ is a path between a vertex of $e_i$ and a vertex of $e_{i+1}$ and if $E(P'_{i,i+1})= \emptyset$, then $e_i \cap e_{i+1} \not = \emptyset$. 

\begin{claim}
	\label{P'sareedgedisjoint}
The paths $P'_{i,i+1}$ (for $1 \le i \le s$) cannot contain any of the edges $e_j$ (for $1 \le j \le s$). Moreover, for any $1 \le i \not = j \le s$, the paths $P'_{i,i+1}$ and $P'_{j,j+1}$ are edge-disjoint. 
\end{claim}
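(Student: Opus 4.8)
The plan is to exploit the rigidity of the hyperedges of $H$: every hyperedge has the form $v_{i,j}\, l^t_i r^t_j$ and hence contains exactly one vertex of $B$, with its two layer-vertices $l^t_i, r^t_j$ sitting in a common layer $t$. First I would record a structural fact about how basic edges of $C^{\lin}_{2k'+1}$ give rise to hyperedges. Recall that in a linear cycle each hyperedge contains exactly two basic vertices (the endpoints of its basic edge) together with one further vertex that lies in no other hyperedge of the cycle; call the latter the \emph{apex} of that hyperedge. Each basic edge of $P_{i,i+1}$ that contributes an edge to $P'_{i,i+1}$ has the form $l^t_\alpha r^t_\beta$ with both endpoints in a single layer, and since $H$ is linear the unique hyperedge through this pair is $v_{\alpha,\beta}\, l^t_\alpha r^t_\beta$. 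Thus the two basic vertices of this hyperedge are $l^t_\alpha$ and $r^t_\beta$, its apex is the $B$-vertex $v_{\alpha,\beta}$, and its projection to $G$ is exactly the edge $l_\alpha r_\beta$ of $P'_{i,i+1}$.

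For the first assertion I would argue by contradiction: if $e_j = l_{\alpha_j} r_{\beta_j}$ lay on some $P'_{i,i+1}$, then the associated hyperedge would be $v_{\alpha_j,\beta_j}\, l^t_{\alpha_j} r^t_{\beta_j} = c_j\, l^t_{\alpha_j} r^t_{\beta_j}$, so $c_j$ would be the apex of that hyperedge. But $c_j$ is a basic vertex of $C^{\lin}_{2k'+1}$ and therefore lies in two hyperedges, whereas an apex lies in only one; this contradiction gives the first part.

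For edge-disjointness I would suppose $P'_{i,i+1}$ and $P'_{j,j+1}$ (with $i \ne j$) shared an edge $l_\alpha r_\beta$, arising from basic edges $l^t_\alpha r^t_\beta \in P_{i,i+1}$ and $l^{t'}_\alpha r^{t'}_\beta \in P_{j,j+1}$. If $t = t'$ these are literally the same basic edge, contradicting that the subpaths $P_{i,i+1}$ are pairwise edge-disjoint. If $t \ne t'$ then $v_{\alpha,\beta}\, l^t_\alpha r^t_\beta$ and $v_{\alpha,\beta}\, l^{t'}_\alpha r^{t'}_\beta$ are two distinct hyperedges of the cycle sharing the vertex $v_{\alpha,\beta}$, which is the apex of each; once more this contradicts that an apex belongs to a single hyperedge of the cycle.

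The main obstacle is exactly the across-layer case $t \ne t'$. Edge-disjointness of the subpaths $P_{i,i+1}$ inside $H$ does not automatically survive the projection to $G$, because basic edges $l^t_\alpha r^t_\beta$ living in different layers collapse onto the same edge $l_\alpha r_\beta$ of $G$; resolving this is precisely what forces us to track the $B$-vertex $v_{\alpha,\beta}$ and to invoke the uniqueness of the apex within a linear cycle. The two assertions then feed the eventual goal of assembling the $P'_{i,i+1}$ together with the edges $e_i$ into a short closed walk in $G$, contradicting its girth.
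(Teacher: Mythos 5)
Your proof is correct and takes essentially the same route as the paper's: both parts hinge on the fact that the unique hyperedge of $H$ through a basic edge $l^t_\alpha r^t_\beta$ is $v_{\alpha,\beta}\,l^t_\alpha r^t_\beta$, and then contradict the intersection pattern of a linear cycle (the paper phrases this as ``the basic cycle uses exactly two vertices of each hyperedge'' and ``hyperedges over disjoint basic edges are disjoint,'' which is precisely your apex-uniqueness observation). Your explicit handling of the $t=t'$ case via edge-disjointness of the $P_{i,i+1}$ is a minor extra care the paper leaves implicit; there are no gaps.
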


\begin{proof}
Assume for the sake of contradiction a path $P'_{i,i+1}$ (for some $1 \le i \le s$) contains an edge $e_j$ (for some $1 \le j \le s$). This implies there exists $t$ with $1 \le t \le q$, such that $e_j^t$ is contained in $P_{i,i+1}$, so $e_j^t$ is contained the basic cycle of $C^{\lin}_{2k'+1}$. Then the (only) hyperedge containing $e_j^t$, namely $l_{\alpha_j}^t r_{\beta_j}^tv_{\alpha_j, \beta_j} = l_{\alpha_j}^t r_{\beta_j}^tc_j$ is a hyperedge of the linear cycle $C^{\lin}_{2k'+1}$. 
%We claim that the edges $e_i^t$ (for $1 \le i \le s$ and $1 \le t \le q$) are not contained in the basic cycle of $C^{\lin}_{2k'+1}$. 
%Indeed, if $e_i^t$ is contained in the basic cycle, then the (only) hyperedge containing $e_i^t$, namely $l_{\alpha_i}^t r_{\beta_i}^tc_i$ is a hyperedge of the linear cycle $C^{\lin}_{2k'+1}$. 
However, by definition of a linear cycle, the basic cycle must use exactly two vertices of any hyperedge of its linear cycle, a contradiction. Therefore the paths $P'_{i,i+1}$, $1 \le i \le s$, cannot contain any of the edges $e_j$ (for $1 \le j \le s$). 

Now we will show that for any $1 \le i \not = j \le s$, $P'_{i,i+1}$ and $P'_{j,j+1}$ are edge-disjoint. Suppose for a contradiction that $l_{\alpha}r_{\beta} \in E(P'_{i,i+1}) \cap E(P'_{j,j+1})$ for some $1 \le \alpha \le z_1$ and $1 \le  \beta \le z_2$. Then there exist $t \not = t'$ such that $l^t_{\alpha}r^t_{\beta}$ and $l^{t'}_{\alpha}r^{t'}_{\beta}$ are two disjoint edges of the basic cycle of $C^{\lin}_{2k'+1}$. However, $l^t_{\alpha}r^t_{\beta}v_{\alpha, \beta}, l^{t'}_{\alpha}r^{t'}_{\beta}v_{\alpha, \beta} \in E(H)$, which is impossible since the hyperedges containing disjoint edges of the basic cycle of a linear cycle must also be disjoint, by the definition of a linear cycle.
\end{proof}

%We claim that for each $j$ with $1 \le j \le s$, the first vertex of $P'_{j,j+1}$ is contained in $e_j$. 

Recall that by definition, the first vertex of $P_{j,j+1}$ is $c_j$. So the first edge of $P_{j,j+1}$ is contained in a hyperedge of the form $e_j^t \cup c_j$ for some $t$ (indeed all the hyperedges containing $c_j$ are of this form). This means the second vertex of $P_{j,j+1}$ is contained in $e_j^t$, so the first vertex of $P'_{j,j+1}$ is contained in $e_j$.  Similarly, the last vertex of $P'_{j-1,j}$ is also contained in $e_j$. Therefore, the last vertex of $P'_{j-1,j}$ and the first vertex of $P'_{j,j+1}$ are both contained in $e_j$. If these vertices are different, then we call $e_j$ a \emph{connecting edge}. So using Claim \ref{P'sareedgedisjoint}, the edges of $\cup_i E(P'_{i,i+1})$ together with the connecting edges form a circuit $\mathcal C$ in $G$ (i.e., a cycle where vertices may repeat but edges do not repeat). 

Now we claim that $\mathcal C$ is non-empty and contains at most $2k-1$ edges. Indeed, the number of edges of $\mathcal C$ is at least $\sum_{i=1}^s \abs{E(P'_{i,i+1})}$. Moreover, as the number of connecting edges is at most $s$, the number of edges in $\mathcal C$ is at most $\sum_{i=1}^s\abs{E(P'_{i,i+1})} + s$. 
%Notice that either $V(P'_{j-1,j}) \cap V(P'_{j, j+1}) \not = \emptyset$ or $e_j$ has a common vertex with both $P'_{j-1,j}$ and $P'_{j, j+1}$.  Then it is easy to see that the set of edges $S := \cup_i E(P'_{i,i+1}) \cup \{e_j \mid V(P'_{j-1,j}) \cap V(P'_{j, j+1}) = \emptyset \}$ form a circuit (i.e., a cycle where vertices may repeat but edges do not repeat). Moreover, $\sum_{i=1}^s \abs{E(P'_{i,i+1})} \le \abs S \le  \sum_{i=1}^s\abs{E(P'_{i,i+1})} + s$.
Since $\sum_{i=1}^s\abs{E(P'_{i,i+1})} = \sum_{i=1}^s \abs{E(P_{i,i+1})} -2s = 2k'+1-2s$, and $2 \le s \le k'$, it is easily seen that $\mathcal C$ is non-empty and contains at most $2k'+1-s \le 2k'-1 \le 2k-1$ edges, as claimed. (Let us remark that here the fact that the length of the linear cycle $C^{\lin}_{2k'+1}$ is odd played a crucial role in ensuring that the circuit $\mathcal C$ is non-empty --indeed, if the length is even, it is possible that $E(P'_{i,i+1})$ is empty for each $i$.)

Since every non-empty circuit contains a cycle, we obtain a cycle of length at most $2k-1$ in $G$, a contradiction, as desired.

\vspace{3mm}

\textbf{Bounding $\ex^{\lin}_3 (n, \C^{\lin}_{2k+1})$ from below: } 
We assumed $\ex_{\bip}(z, \C_{2k-2}) \ge  (1 + o(1)) c (z/2)^{\alpha}$ for some $c, \alpha > 0$. So there is a $\C_{2k-2}$-free bipartite graph $G$ on $z$ vertices with \begin{equation}
\label{sizeofG}
\abs{E(G)} =  (1 + o(1)) c \left(\frac{z}{2}\right)^{\alpha}.
\end{equation}

Let $H$ be the $\C^{\lin}_{2k+1}$-free hypergraph constructed based on $G$ (as described in the Construction above). Then the number of hyperedges in $H$ is $\abs{E(G)} \cdot q$. So we have

%Since $\abs{E(G)} = \ex_{\bip}(z, \C_{2k-2})$, we have 

\begin{equation}
\label{const_lower}
\ex^{\lin}_3 (n, \C^{\lin}_{2k+1}) \ge \abs{E(H)} = \abs{E(G)} \cdot q \ge \abs{E(G)} \cdot  \left \lfloor\frac{n - \abs{E(G)}}{z} \right \rfloor.
\end{equation}

%
%We assumed $\ex_{\bip}(z, \C_{2k-2}) \ge  c \cdot (z/2)^{\alpha} (1 + o(1))$ for some $c, \alpha > 0$. 

Substituting \eqref{sizeofG} in \eqref{const_lower} and choosing  $z = (1 + o(1)) \left(\frac{2^{\alpha}(\alpha-1)}{c(2\alpha -1)}\right)^{\frac{1}{\alpha}} n^{\frac{1}{\alpha}}$, we obtain that 

$$\ex^{\lin}_3 (n, \C^{\lin}_{2k+1})  \ge (1 + o(1)) \frac{\alpha c}{4 \alpha - 2} \cdot \left(\frac{\alpha -1}{c(2 \alpha -1)}\right)^{1 - \frac{1}{\alpha}} n^{2 - \frac{1}{\alpha}},$$
completing the proof of Theorem \ref{construction}.
%If $2k-2 =2$, it is easy to see that  $c = 1$ and $ \alpha = 2$, so $\ex_3(n, \mathcal C_5) \ge \frac{1}{3 \sqrt3}n^{3/2} (1 +o(1))$. In the cases $2k-2 = 4, 6, 10$ it is known [CITE] that $c = 1$ and $\alpha = 1+\frac{1}{k-1}$, so we get $\ex_3(n, \{C_2, \mathcal C_{2k+1}\})  \ge \frac{k}{2} (\frac{n}{k+1})^{1+ \frac{1}{k}}$. 

\section*{Acknowledgment}

We are grateful to the two anonymous referees for their detailed and very helpful comments.

The research of the authors is partially supported by the National Research, Development and Innovation Office – NKFIH, grant K116769.

\end{document}